\theoremstyle{plain}
\newtheorem{theorem}{Theorem}[section]
\newtheorem{proposition}[theorem]{Proposition}
\newtheorem{lemma}[theorem]{Lemma}
\newtheorem{corollary}[theorem]{Corollary}
\newtheorem{conjecture}[theorem]{Conjecture}
\begin{document}

\title{Self-similarity in the Kepler-Heisenberg problem}

\author{Victor Dods}
\email{victor.dods@gmail.com}

\author{Corey Shanbrom}
\address{California State University, Sacramento, 6000 J St., Sacramento, CA 95819, USA}
\email{corey.shanbrom@csus.edu}
\thanks{This material is based upon work supported by the National Science Foundation under Grant No. DMS-1440140 while the first author was in residence at the Mathematical Sciences Research Institute in Berkeley, California, during Fall 2018.}

\date{\today}

\subjclass[2010]{70H12, 70F05, 53C17, 65P10}


\begin{abstract}
The Kepler-Heisenberg problem is that of determining the motion of a planet around a sun in the Heisenberg group, thought of as a three-dimensional sub-Riemannian manifold. The sub-Riemannian Hamiltonian provides the kinetic energy, and the gravitational potential is given by the fundamental solution to the sub-Laplacian. The dynamics are at least partially integrable, possessing two first integrals as well as a dilational momentum which is conserved by orbits with zero energy. The system is known to admit closed orbits of any rational rotation number, which all lie within the fundamental zero-energy integrable subsystem. Here we demonstrate that, under mild conditions, zero-energy orbits are self-similar. Consequently we find that these zero-energy orbits stratify into three families: future collision, past collision, and quasi-periodicity without collision. If a collision occurs, it occurs in finite time.
\end{abstract}

\maketitle


\section{Introduction}\label{intro}

In geometric mechanics one usually constructs a dynamical system on the cotangent bundle of a Riemannian manifold $(M, g)$ by taking a Hamiltonian of the form $H=K+U$, where the kinetic energy $K$ is determined by the metric $g$ and the potential energy $U$ is chosen to represent a particular physical system.  In particular, the classical Kepler problem has been extensively studied in spaces of constant curvature. 
These investigations date back to the discovery of non-Euclidean geometry, when both Lobachevsky and Bolyai independently posed the Kepler problem in hyperbolic 3-space.  Serret, Lipschitz, and Killing studied the Kepler problem on two and three dimensional spheres.  Modern researchers consider $n$ bodies in spaces of arbitrary constant curvature $\kappa$.  
See \cite{Diacu} and the references therein for a thorough history. 
In \cite{MS}, we first posed the Kepler problem on the Heisenberg group in the following manner.

Let $(\mathcal H, D, \langle \cdot, \cdot\rangle)$ denote the sub-Riemannian geometry of the \emph{Heisenberg group}:
\begin{itemize}
\item $\mathcal H$ is diffeomorphic to $\mathbb R^3$ with usual global coordinates $(x, y, z)$
\item $D$ is the plane field distribution spanned by the vector fields $X := \frac{\partial}{\partial x} -\frac{1}{2}y\frac{\partial}{\partial z}$ and $Y := \frac{\partial}{\partial y} +\frac{1}{2}x\frac{\partial}{\partial z}$ 
\item $\langle \cdot, \cdot\rangle$ is the inner product on $D$ which makes $X$ and $Y$ orthonormal; that is, $ds^2 = (dx^2+dy^2)|_D$.
\end{itemize} 
Curves tangent to the distribution have the property that their $z$-coordinate always equals the area traced out by their projection to the $x,y$-plane. See \cite{Tour} for a detailed description of this geometry.

We define the Kepler problem on the Heisenberg group to be the dynamical system on $T^*\mathcal H=(x, y, z, p_x, p_y, p_z)$ with Hamiltonian 
$$H=\underbrace{\tfrac{1}{2}((p_x-\tfrac{1}{2}yp_z)^2+(p_y+\tfrac{1}{2}xp_z)^2)}_K\underbrace{-\frac{1}{8\pi\sqrt{(x^2+y^2)^2+{16}z^2}}}_{U}.$$
The kinetic energy is $K=\frac{1}{2}(P_X^2 +P_Y^2)$, where $P_X= p_x-\frac{1}{2}yp_z$ and $P_Y= p_y+\frac{1}{2}xp_z$ are the dual momenta to the vector fields $X$ and $Y$; the flow of $K$ gives the geodesics in $\mathcal H$.  
The potential energy $U$ is the fundamental solution to the Heisenberg sub-Laplacian; see \cite{Folland}.  

The resulting system is at least \emph{partially integrable}, in that both the total energy $H$ and the angular momentum $p_{\theta}=xp_y-yp_x$ are conserved.  Moreover, the quantity 
\begin{equation}\label{J}
J=xp_x + yp_y +2zp_z
\end{equation}
generates the Carnot group dilations in $T^*\mathcal H$, which are given by 
\begin{equation}\label{dilation}
\delta_{\lambda}(x, y, z, p_x, p_y, p_z)=(\lambda x, \lambda y, \lambda^2 z, \lambda^{-1}p_x, \lambda^{-1}p_y, \lambda^{-2}p_z)
\end{equation}
for $\lambda>0$.
It satisfies 
\begin{equation}
\dot J =2H
\end{equation}
and is thus conserved if $H=0$.  
Due to the three integrals $H$, $p_{\theta}$, and $J$, the $H=0$ subsystem is integrable by the Liouville-Arnold theorem.
It is still
unknown whether the entire system is integrable.

The zero-energy subsystem has been and continues to be our main focus. In \cite{MS} we showed that periodic orbits must lie on the invariant hypersurface $\{H=0\}$, and that the dynamics are integrable there.  In \cite{CS} we showed that periodic orbits do exist and in \cite{DS} that they enjoy a rich symmetry structure; they realize every rational rotation number $j/k \in (0,1]$.  Periodic orbits must have $J=0$.  Here we investigate orbits with $J \neq 0$ as well.

As in \cite{DS}, much of this work was computer-assisted.  Running extensive careful numerical experiments led to many insights, including numerical verification of the statement of Theorem \ref{main thm} and the correct choice of dilation and time-reparametrization factors (see Section \ref{proof}),  which made the formal proof relatively simple.  
Our codebase can be found at \cite{Github}.

In Section \ref{main section} we state the main result and discuss the mild assumptions included and their potential removal.  In Section \ref{coords} we introduce a new coordinate system on $T^*\mathcal H$ specifically adapted for the Kepler-Heisenberg problem, which separates the symmetries of the system.  In Section \ref{thm, cors, pics} we state our result technically, derive some interesting consequences, and provide some pictures.  Finally, Section \ref{proof} contains the proof.

\section{Main Result and Discussion}  \label{main section}
We will call a Kepler-Heisenberg orbit $\gamma(t)=(x(t), y(t), z(t))$ an \emph{A-orbit} if it satisfies:
\begin{align*}
&H=0  \tag{A1}\label{H=0} \\
&\gamma(t) \  \text{does not meet the $z$-axis} \tag{A2}  \label{avoidzaxis} \\
&z(t)\ \text{has at least three zeros}.  \tag{A3}\label{3zeros} 
\end{align*}
The explanation for these conditions will constitute most of the the remainder of this section.  Our main result is the following, proved in Section \ref{proof}.
\begin{theorem}\label{main result}
Every Kepler-Heisenberg $A$-orbit is self-similar.  
\end{theorem}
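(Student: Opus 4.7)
The plan is to reduce the problem to a Poincar\'e return map on the slice $\{z=0\}$ and exploit the interaction of the dilation and rotation symmetries. The Carnot dilations $\delta_\lambda$ are symplectomorphisms rescaling $H$ by $\lambda^{-2}$, so the Hamiltonian flow $\phi_t$ of $H$ satisfies $\phi_t \circ \delta_\lambda = \delta_\lambda \circ \phi_{\lambda^{-2} t}$; the rotations $R_\nu$ about the $z$-axis are symplectomorphisms commuting strictly with $\phi_t$. Since $J$ and $p_\theta$ are invariant under both actions and $\{z=0\}$ is preserved by both, the reduced Poincar\'e section
\begin{equation*}
\Sigma_0 \;:=\; \{z=0\} \cap \{H=0,\; J=J_0,\; p_\theta=p_{\theta,0}\}
\end{equation*}
(with $J_0$ and $p_{\theta,0}$ the conserved values along the given $A$-orbit $\gamma$) is preserved by both $\delta_\lambda$ and $R_\nu$.

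First I would describe $\Sigma_0$ concretely. Condition \eqref{avoidzaxis} permits polar coordinates $(r,\theta)$ in the $xy$-plane with $r>0$. On $\Sigma_0$ the equation $J=J_0$ fixes $p_r=J_0/r$, and a short calculation shows that on $\{z=0\}$ the Hamiltonian depends on $(r,\theta,p_z)$ only through the dilation-invariant combination $u := r^2 p_z$; the constraint $H=0$ then becomes a quadratic equation in $u$ whose coefficients depend only on $J_0$ and $p_{\theta,0}$. Hence $u$ is locked to one of at most two fixed values and $\Sigma_0$ is parametrized by $(r,\theta)\in(0,\infty)\times S^1$ together with a discrete branch choice. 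In these parameters the combined action of $\delta_\lambda$ and $R_\nu$ is $(r,\theta)\mapsto(\lambda r,\theta+\nu)$, which is transitive on each connected component of $\Sigma_0$.

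The central observation is that the first-return map $\Phi:\Sigma_0\to\Sigma_0$ is equivariant under both $\delta_\lambda$ and $R_\nu$: rotation equivariance is automatic, and dilation equivariance holds because the time rescaling by $\lambda^2$ does not alter the ordering of the zeros of $z$. A self-map of $\Sigma_0$ equivariant with respect to the transitive abelian action described above must coincide on each connected component with a single group element, so either $\Phi$, or $\Phi^2$ in the case that $\Phi$ exchanges the two $u$-branches, equals a fixed composition $\delta_\lambda R_\nu$. Condition \eqref{3zeros} provides precisely the three zeros of $z$ needed to witness at least one iterate of $\Phi$ returning to the starting branch. This yields $\gamma(T^*) = \delta_\lambda R_\nu\,\gamma(0)$ for some $T^*>0$, and applying the flow equivariance to this identity gives
\begin{equation*}
\gamma(T^* + \lambda^2 t) \;=\; \delta_\lambda R_\nu\,\gamma(t) \qquad \text{for all } t,
\end{equation*}
which is the desired self-similarity of $\gamma$.

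The main obstacle will be bookkeeping rather than conceptual: one has to verify cleanly that $H=0$ on $\{z=0\}$ really does reduce to a quadratic in $u=r^2 p_z$, and to track the branch structure carefully enough to determine whether self-similarity is realized after one return or must wait for two. The coordinate system promised in Section \ref{coords}, designed to separate the symmetries, should make this transparent by turning $\delta_\lambda$ and $R_\nu$ into translations in two of the new coordinates, so that the essential dynamics is a one-dimensional Hamiltonian subsystem whose periodic motion automatically produces the return data $(\lambda,\nu,T^*)$.
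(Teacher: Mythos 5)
Your reduction is essentially the paper's own strategy in different clothing: the section $\{z=0\}\cap\{H=0,\,J=J_0,\,p_\theta=p_{\theta,0}\}$, the dilation- and rotation-invariant branch variable $u=r^2p_z$ (which on the section equals $4p_u$ in the adapted coordinates of Section \ref{coords}), and the simply transitive $(\lambda,\nu)$-action are exactly the ingredients of the paper's Lemma \ref{endpts}; your final step, propagating $\gamma(T^*)=\delta_\lambda R_\nu\,\gamma(0)$ by the conjugacy $\phi_t\circ\delta_\lambda=\delta_\lambda\circ\phi_{\lambda^{-2}t}$, is a clean substitute for the paper's piecewise construction of an extended curve plus Picard--Lindel\"of, and that part is correct. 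The problem is that you have glossed over the one step the paper singles out as the technical heart of the proof. Equivariance plus transitivity tells you only that the return map $\Phi$ carries each branch component onto a single branch component; it does not tell you that $\Phi$ permutes the two branches. Nothing in your write-up excludes the scenario $\Phi(B_+)=B_-$ and $\Phi(B_-)=B_-$, in which the orbit crosses from one $u$-branch to the other and never returns to the starting branch; in that case three zeros of $z$ do not ``witness an iterate of $\Phi$ returning to the starting branch,'' and the needed identity $c(t_2)=\rho_\varphi\circ\delta_\lambda\circ c(t_0)$ (equivalently $p_u(t_0)=p_u(t_2)$) fails for the parameters built from $t_0,t_2$. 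Your sentence invoking \eqref{3zeros} is precisely the unproved claim, and the remainder of the argument hinges on it.

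To close the gap you must show that consecutive crossings alternate branches (or at least that the two-fold return preserves the branch). The paper does this in Lemma \ref{endpts} by transversality: at a zero of $z$, Hamilton's equations give $\dot u|_{u=0}=2\gamma\,e^{-2s}\sqrt{\tfrac{1}{4\pi}-p_s^2}$ with $\gamma=\pm1$ the branch label, so when the discriminant is positive every crossing is transversal with sign determined by the branch, and continuity of $u$ between zeros forces the sign --- hence the branch --- to alternate, giving $p_u(t_0)=p_u(t_2)$; the degenerate case $\tfrac{1}{4\pi}-p_s^2=0$ is immediate since then $p_u|_{u=0}=-\tfrac12 p_\theta$. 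Alternatively, staying inside your framework, you could observe that a first-return map is injective, and an injective map equivariant for the simply transitive abelian action must permute the two branch components, which again forces the second return onto the starting branch. With either patch your argument goes through and is a legitimate variant of the paper's proof; without one, it does not.
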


\begin{figure}
\centering
\includegraphics[width=.8\textwidth]{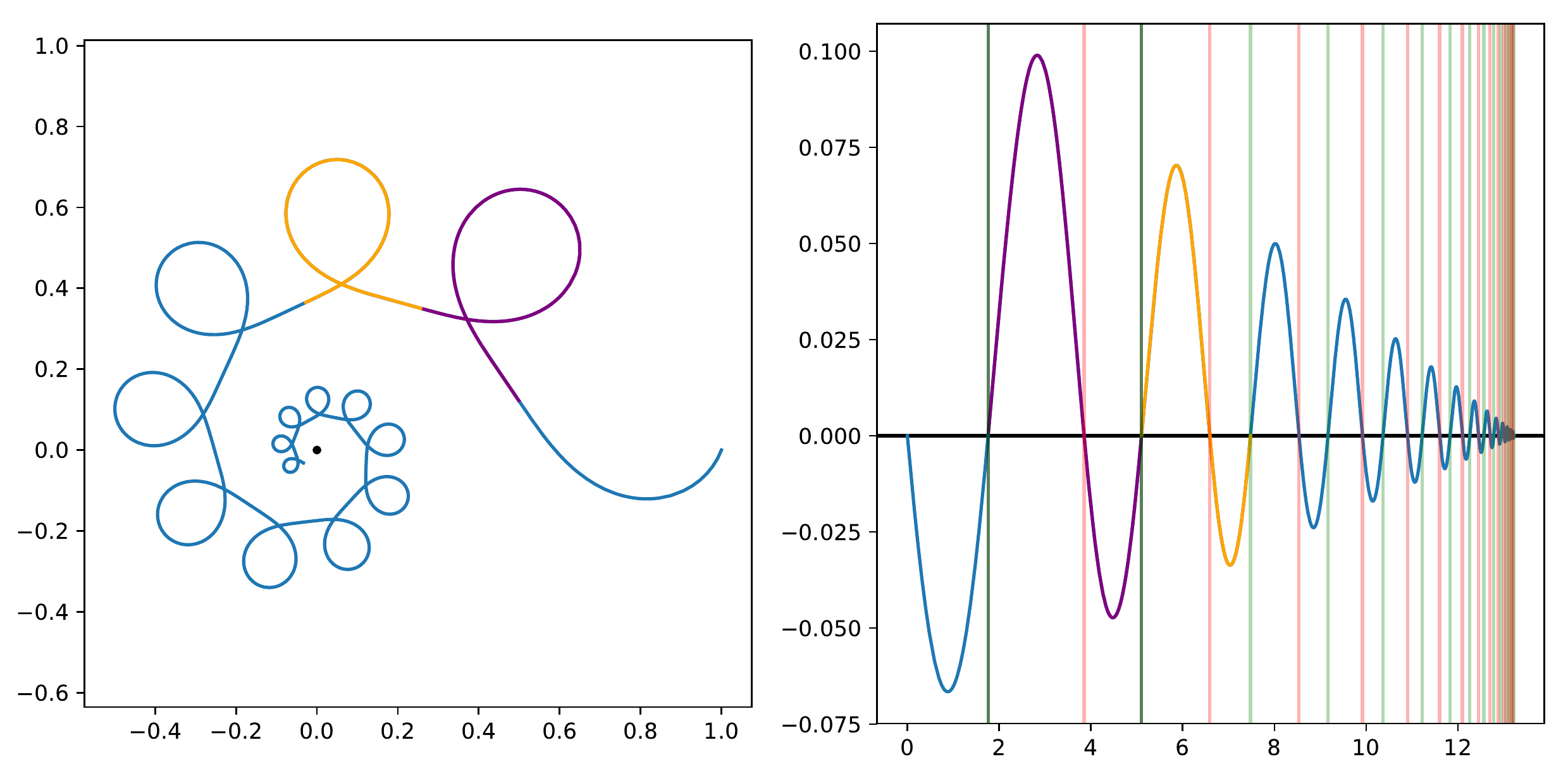}
\caption{A sample self-similar orbit.  Here and below, the left box shows the projection of the orbit to the $x,y$-plane.  The right box shows the $z$-coordinate over time, which is equal to the area traced out by this projection.  The black dot represents the sun, the blue curve represents the planet's orbit, the purple segment a sampled fundamental domain, and the orange segment the self-similar replication of the purple segment. }
\label{selfsim21}
\end{figure}

See Figure \ref{selfsim21} for an example. The self-similarity here involves a dilation (see (\ref{dilation})), a rotation, and a time-reparametrization.  A more precise statement can be found in Theorem \ref{main thm}.  
To the best of our knowledge these solutions represent the first occurrence of this type of self-similarity within 
solutions to ODEs of Hamiltonian type.

Note that due to the time-reparametrization,  this self-similarity is not true quasi-periodicity in the sense of ``periodic modulo a transformation of space".  However, zero-energy orbits are naturally stratified into three families, depending on whether $J$ is negative, positive, or zero.  Self-similarity for orbits with $J=0$ does not require a time-reparametrization, so we find that these orbits are genuinely quasi-periodic.  See Corollary \ref{quasi}.

We now explain the origin of conditions (\ref{H=0}) -- (\ref{3zeros}) above, and provide some conjectures regarding their necessity.

Condition (\ref{H=0}) is necessary as 
nearly everything we know about the Kepler-Heisenberg system lies within the $H=0$ subsystem.  As described in Section \ref{intro}, this subsystem is known to be integrable, and it contains all periodic orbits.

Condition (\ref{avoidzaxis}) is ostensibly a coordinate singularity.
The backbone of our proof of Theorem \ref{main result} is the correct choice of coordinate system, which separates the dilation and rotation group actions.  See Section \ref{coords}.   These charts exclude the $z$-axis.  However, this is not an artificial singularity: the $z$-axis in the Heisenberg group is the conjugate and cut locus.  Orbits with initial configuration on the $z$-axis can behave strangely.  For example, in \cite{MS} we discovered orbits of the form $(0, 0, c_1, 0, 0, c_2t)$ for constants $c_1, c_2$.  That is, a planet can sit at a stationary point on the $z$-axis with linearly growing momentum.  Note that such orbits satisfy $H<0$.  

Numerical analysis of zero-energy orbits meeting the $z$-axis shows two distinct types of behavior depending on the dilational momentum, with the bifurcation occuring at the critical value $|J|=\frac{1}{2\sqrt{\pi}}$ appearing in Proposition \ref{J bdd}.  
 If $|J|<\frac{1}{2\sqrt{\pi}}$ then the orbit is like a dilating figure-eight; as $J$ shrinks this family approaches the rotationally asymmetric periodic figure-eight orbit shown in Figure 3 of \cite{DS}.  These orbits are self-similar in the same way as the others considered in this paper, and we believe our proof can be modified to include this case.  However, if $|J|>\frac{1}{2\sqrt{\pi}}$ then the orbit is like a dilating helical Heisenberg geodesic.  These are self-similar in a different way; in particular, $z(t)$ is not oscillatory but monotonic. These planets escape the sun's gravitational pull in some sense.
 For $J>0$, for example, 
 the necessary escape momentum is determined by $J=xp_x+yp_y+2zp_z>\frac{1}{2\sqrt{\pi}}$; for an orbit starting on the $z$-axis we simply need $p_z>\frac{1}{4z\sqrt{\pi}}$.
Work in this area is ongoing; the singular geometry of the Heisenberg group along the $z$-axis means special care must be taken. 
Our numerical investigation suggests the following.
\begin{conjecture}\label{conj z axis selfsim}
Any zero-energy orbit meeting the $z$-axis is self-similar.
\end{conjecture}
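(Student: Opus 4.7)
The plan is to treat the two dynamical regimes identified in the discussion above — subcritical $|J|<\tfrac{1}{2\sqrt{\pi}}$ and supercritical $|J|>\tfrac{1}{2\sqrt{\pi}}$ — separately, using strategies adapted to the qualitatively different behavior in each.

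For the subcritical regime, where the orbit resembles a dilating figure-eight, I would implement the extension suggested after Proposition \ref{J bdd} via a continuity argument. Specifically, I would perturb the initial conditions slightly off the $z$-axis to obtain a nearby A-orbit, apply Theorem \ref{main result} to obtain the self-similarity data (dilation ratio, rotation angle, and time-reparametrization exponent) as functions of the continuously varying integrals $H$, $p_\theta$, and $J$, and then pass to the limit as the perturbation vanishes. Continuous dependence of the Hamiltonian flow on initial data, together with the $\delta_\lambda$-equivariance of the flow on $\{H=0\}$, should yield convergence of the fundamental arc on compact time intervals disjoint from the $z$-axis crossings, so that the three-parameter self-similarity transformation extends continuously to the limiting orbit.

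For the supercritical regime, condition (\ref{3zeros}) fails since $z(t)$ is monotonic, so Theorem \ref{main result} does not apply and genuinely new analysis is required. Here I would exploit the dilational conservation $\dot J = 0$ on $\{H=0\}$ directly: since $J$ generates the dilation flow $\delta_\lambda$ and Poisson-commutes with $H$ on the zero-energy surface, the Hamiltonian trajectory lies in a joint invariant submanifold of the two flows. For an orbit initiating on the $z$-axis, the residual symmetry group of the initial data reduces to a one-parameter subgroup combining dilation with rotation about the $z$-axis, and I would attempt to show that the trajectory coincides with an orbit of this subgroup up to a time reparametrization, by solving the resulting one-dimensional reduced ODE in the coordinates of Section \ref{coords}, where the dilation and rotation symmetries are separated.

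The main obstacle I anticipate is the rigorous handling of the $z$-axis itself, where $U$ is singular at the origin, the adapted coordinates of Section \ref{coords} degenerate, and the underlying sub-Riemannian geometry has its conjugate and cut locus. A local regularization in the spirit of Moser's regularization of the classical Kepler problem may be needed to verify that the self-similarity transformation extends continuously through each crossing, and the critical value $|J|=\tfrac{1}{2\sqrt{\pi}}$ is presumably a separatrix bifurcation that will require its own individual treatment.
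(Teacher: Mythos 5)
This statement is one of the paper's open conjectures: the authors offer only numerical evidence for it, together with the remark that for $|J|<\tfrac{1}{2\sqrt{\pi}}$ they \emph{believe} their proof of Theorem \ref{main result} can be modified, and that the case $|J|>\tfrac{1}{2\sqrt{\pi}}$ is self-similar ``in a different way'' with $z(t)$ monotone. So there is no paper proof to compare against, and your proposal, as written, is a research outline rather than a proof; each of its three branches has a genuine gap. In the subcritical branch, continuity of the flow in initial data gives you $c_n\to c$ on compact time intervals, but it does not give you convergence of the self-similarity data. The identity $c_n(t)=\rho_{\varphi_n}\circ\delta_{\lambda_n}\circ c_n\circ\tau_n(t)$ passes to the limit only if $\lambda_n$, $\varphi_n$ and the fundamental times $t_0^n,t_2^n$ converge to meaningful limits, and that is exactly where the axis bites: $\varphi_n=\theta_n(t_2^n)-\theta_n(t_0^n)$ is built from $\theta=\arg(x,y)$, which is undefined where the limiting orbit crosses the axis and can fail to converge along the approximating family; moreover the machinery producing the endpoint matching (Lemma \ref{endpts}) rests on Lemma \ref{J bounds} and Proposition \ref{J bdd}, whose derivations use $\cos u\neq 0$, i.e.\ precisely hypothesis (\ref{avoidzaxis}) that the limit violates (on the axis $R=0$, so $u=\pm\pi/2$, and the paper notes $|J|$ can then exceed $\tfrac{1}{2\sqrt{\pi}}$). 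You would also need uniform control keeping $t_2^n-t_0^n$ bounded away from $0$ and $\infty$ and the $z$-crossings transversal in the limit; none of this is supplied by ``continuous dependence.''

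The supercritical branch is the more serious problem: you propose to show the trajectory is, up to time reparametrization, an orbit of a one-parameter subgroup mixing dilation and rotation. That is a relative-equilibrium (homothetic) claim, and it is not justified by the symmetry argument you give --- a point on the $z$-axis is fixed by rotations, but the initial \emph{covector} is not rotation-invariant unless its horizontal momentum vanishes, so the initial condition in $T^*\mathcal{H}$ has no residual continuous symmetry in general. The paper's numerics (``dilating helical Heisenberg geodesic'') point to a discrete self-similarity with monotone $z(t)$, for which the whole mechanism of Theorem \ref{main thm} (matching at consecutive zeros of $z$) has no replacement in your plan, and you have not even specified what ``self-similar'' means for such orbits. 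Finally, the critical case $|J|=\tfrac{1}{2\sqrt{\pi}}$ is explicitly deferred (``presumably a separatrix bifurcation''), yet the conjecture covers it. In short, the proposal identifies the right obstacles (coordinate degeneration on the cut locus, the bifurcation at $|J|=\tfrac{1}{2\sqrt{\pi}}$) but does not overcome any of them; the conjecture remains open both in the paper and after your argument.
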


Condition (\ref{3zeros}) represents our failure so far to prove our suspicion that $z(t)$ is oscillatory for any orbit satisfying (\ref{H=0}) and (\ref{avoidzaxis}).   
Here, \textit{oscillatory} is used in the sense of oscillation theory
: that $z(t)$ has infinitely many zeros.
We have numerically verified that this is the case for 2,500 uniformly sampled orbits.  
We conducted a comprehensive survey of the literature on Sturm comparison theorems and found no known version general enough for the second-order, inhomogeneous, nonlinear  ODE satisfied by $z(t)$, so the question remains open.
\begin{conjecture}\label{conj z osc}
For any zero-energy orbit not meeting the $z$-axis, $z(t)$ is oscillatory.
\end{conjecture}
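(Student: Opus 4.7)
The plan is to reduce the oscillation question to a planar system by exploiting the three conservation laws ($H=0$, $p_\theta$, $J$) together with the dilation and rotation symmetries. Working in the coordinates adapted to the problem, I introduce the dilation- and rotation-invariant angle $\phi \in (-\pi/2, \pi/2)$ defined by $r^2 = \rho\cos\phi$ and $4z = \rho\sin\phi$, where $r^2 = x^2+y^2$ and $\rho = \sqrt{r^4+16z^2}$ is the Heisenberg gauge norm squared. Hypothesis (\ref{avoidzaxis}) keeps $\phi$ strictly inside the open interval, while $z(t) = 0$ iff $\phi(t) = 0$, so the conjecture becomes the statement that $\phi(t)$ has infinitely many zeros.

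Next, I would derive the reduced planar system for $\phi$. After quotienting by the cyclic variables $\theta$ (via $p_\theta$) and $\log\rho$ (via $J$), using $H=0$ to eliminate one remaining momentum, and rescaling time by the dilation-invariant $d\tau = dt/\rho$, the $\phi$-dynamics should take the autonomous form $\phi'' = -V_{\mathrm{eff}}'(\phi; p_\theta, J)$ on the bounded interval $(-\pi/2, \pi/2)$. The goal is to show that $V_{\mathrm{eff}}$ is confining: its level sets yield closed orbits in the $(\phi, \phi')$ phase plane, forcing $\phi(\tau)$ to be periodic with at least two zero crossings per period. Since $d\tau/dt = 1/\rho > 0$, these oscillations transfer back to infinitely many zeros of $z(t)$.

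A potentially cheaper route is to prove just the existence of three zeros of $z(t)$, which establishes condition (\ref{3zeros}); Theorem \ref{main result} then delivers self-similarity of the orbit, and the dilation together with time-reparametrization in that self-similarity propagate any three zeros into infinitely many, closing the argument. Either way, the main obstacle is verifying confinement of $V_{\mathrm{eff}}$ over every admissible pair $(p_\theta, J)$: this is exactly the analytic content of the Sturm-type theorem the authors could not locate. The effective potential inherits the rational structure of the sub-Laplacian fundamental solution and is not obviously monotone or convex on $(-\pi/2, \pi/2)$. I expect a case analysis split along the sign of $J$ (matching the three families described after Theorem \ref{main result}), together with a delicate study of $V_{\mathrm{eff}}$ as $\phi \to \pm\pi/2$ (where the $z$-axis singularity dominates), to be the crux of the work.
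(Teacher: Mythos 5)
This statement is labeled a conjecture in the paper precisely because no proof is known: the authors report that no Sturm-type comparison theorem they could find applies to the second-order, inhomogeneous, nonlinear ODE satisfied by $z(t)$, and they offer only numerical evidence. Your proposal does not close that gap. The reduction you describe is sound and is essentially the paper's own coordinate system in disguise (your $\phi$ is the coordinate $u=\arg(x^2+y^2,4z)$, your $\log\rho$ is $2s$, and rescaling time by $d\tau=e^{-2s}\,dt$ does make the $(u,p_u)$ dynamics autonomous with parameters $p_\theta$ and $J=p_s$ on $\{H=0\}$). But the entire analytic content --- that the reduced planar motion is confined to a closed level curve, contains no equilibria or other obstructions to full traversal, and actually crosses $u=0$ infinitely often --- is exactly the missing oscillation argument, and you acknowledge leaving it unverified. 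A plan whose crux is ``show $V_{\mathrm{eff}}$ is confining for every admissible $(p_\theta,J)$'' is a restatement of the conjecture, not a proof of it. Note also that the reduced system is not of the pure form $\phi''=-V_{\mathrm{eff}}'(\phi)$: the Hamiltonian of Proposition \ref{coord H} carries the cross term $4\cos(u)\,p_\theta p_u$, so a gyroscopic (linear-in-momentum) term must be handled, not just a potential.

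Two further concrete problems. First, your ``cheaper route'' is backwards: Theorem \ref{main thm} assumes condition (\ref{3zeros}), so before you may invoke self-similarity to replicate zeros you must already produce three zeros of $z(t)$, which is a finite version of the very statement at issue; nothing in the proposal produces even one zero. Second, there is a genuine obstruction your case analysis by the sign of $J$ misses: imposing $u=0$ and $H=0$ forces $4p_u^2+4p_\theta p_u+(p_s^2+p_\theta^2-\tfrac{1}{4\pi})=0$, which has a real solution only if $\tfrac{1}{4\pi}-J^2\geq 0$. Hence any zero-energy orbit with $|J|>\tfrac{1}{2\sqrt{\pi}}$ can never cross $z=0$ at all, and the paper's numerics show exactly this monotone, helix-like behavior above the critical value for orbits meeting the $z$-axis. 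So a proof of the conjecture must also show that no zero-energy orbit avoiding the $z$-axis can have $|J|>\tfrac{1}{2\sqrt{\pi}}$ (the converse direction of Proposition \ref{J bdd}, which is proved there only for orbits already known to cross $z=0$). This step is absent from your outline, and without it the proposed confinement statement is false as stated for supercritical $J$.
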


There is a degenerate case: solutions for which $z$ is identically zero.    Proposition 3 of \cite{MS} shows that such orbits exist, but they are all necessarily lines through the origin in the $x,y$-plane.  These orbits are trivially self-similar, so Theorem \ref{main result} holds.  Note that if $z(t)=0$ on any open interval, then the orbit must be one of these lines through the origin in the plane by uniqueness of solutions to ODEs.  So among orbits satisfying condition (\ref{3zeros}), we will restrict our attention to those with three discrete zeros of $z$, and later in this paper we will refer to ``consecutive" zeros without reference to this degenerate case.

Note that we believe that Conjecture \ref{conj z osc} also holds for orbits which do meet the $z$-axis if $J<\frac{1}{2\sqrt{\pi}}$.  More importantly, Conjectures \ref{conj z axis selfsim} and \ref{conj z osc} say that we suspect our conditions 
(\ref{avoidzaxis}) and (\ref{3zeros}) can be removed from Theorem \ref{main result}.  That is, we suspect the following.

\begin{conjecture}
Every zero-energy orbit is self-similar.
\end{conjecture}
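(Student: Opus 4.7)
The plan is to reduce the conjecture to Theorem \ref{main result} by handling the excluded cases one at a time. Partition the zero-energy orbits into four classes: (i) degenerate orbits with $z(t)\equiv 0$, (ii) non-degenerate orbits which avoid the $z$-axis, (iii) orbits meeting the $z$-axis with $|J|<\tfrac{1}{2\sqrt{\pi}}$, and (iv) orbits meeting the $z$-axis with $|J|\geq\tfrac{1}{2\sqrt{\pi}}$. Class (i) is already disposed of: by Proposition 3 of \cite{MS} such orbits are lines through the origin in the $(x,y)$-plane and are trivially invariant (up to reparametrization) under the dilations $\delta_\lambda$. So the real work is in classes (ii)--(iv).

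For class (ii) one must prove Conjecture \ref{conj z osc} and then invoke Theorem \ref{main result}. My approach would be to pass to the reduced flow on the quotient of $\{H=0\}$ by the $\mathbb{R}_{>0}\times SO(2)$-action generated by $J$ and $p_\theta$. Since both symmetries commute with the Hamiltonian flow on $\{H=0\}$, the reduced phase space is two-dimensional, and the sign of $z$ descends to a well-defined function on this quotient. I would use the adapted coordinates of Section \ref{coords} to write the reduced vector field explicitly, then attempt a Poincar\'e--Bendixson or index-theoretic argument: rule out that a reduced orbit accumulates on an equilibrium or invariant circle lying entirely in $\{z>0\}$ or $\{z<0\}$, exploiting the explicit form of $U$ and the fact that $K$ is bounded away from zero on compact sets missing the $z$-axis. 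Once oscillation is secured, the hypothesis (A3) holds and Theorem \ref{main result} applies verbatim.

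For class (iii), I would argue by continuity in initial conditions. Take a sequence of $A$-orbits $\gamma_n$ approximating a limiting orbit $\gamma$ that meets the $z$-axis with $|J|<\tfrac{1}{2\sqrt{\pi}}$, each $\gamma_n$ with self-similarity factor $(\lambda_n,\theta_n,\tau_n)$ supplied by Theorem \ref{main thm}. The bounds in Proposition \ref{J bdd} should allow one to extract a convergent subsequence of these factors, and then pass to the limit in the self-similarity equation, using that the flow through the $z$-axis is defined on open time intervals away from the singular instant. For class (iv) the situation is qualitatively different: $z(t)$ is monotonic, so (A3) fails and a new argument is required. Here I would look for a reduced equilibrium of the dilation-translation flow corresponding to the dilating helical geodesic; self-similarity then arises as invariance under the one-parameter subgroup at that equilibrium, analogous to how equilibria of a reduced flow lift to relative equilibria on the full phase space.

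The principal obstacle is proving oscillation of $z(t)$ in class (ii), exactly because no standard comparison theorem applies to the inhomogeneous nonlinear ODE satisfied by $z$. The reduction to a two-dimensional quotient flow is the lever I would push hardest, since topological recurrence arguments there bypass the need for a delicate Sturm comparison; however, verifying that the reduced trajectory actually re-enters both half-spaces $\{z>0\}$ and $\{z<0\}$ may still require a delicate energy-type estimate tying the growth of $|z|$ to the conserved quantities $p_\theta$ and $J$. The supercritical $z$-axis case (iv) is the second obstacle, since the ``escaping'' behavior is not captured by the self-similarity notion used in Theorem \ref{main thm} and may require generalizing the definition to allow non-compact fundamental domains.
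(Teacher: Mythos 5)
Be aware first that the statement you are proving is stated in the paper as a conjecture, not a theorem: the paper proves self-similarity only for $A$-orbits (Theorem \ref{main result}), and explicitly records the removal of hypotheses (\ref{avoidzaxis}) and (\ref{3zeros}) as open problems (Conjectures \ref{conj z axis selfsim} and \ref{conj z osc}). Your reduction to classes (i)--(iv) mirrors exactly the discussion in Section \ref{main section}, so the decomposition is sound, but the proposal does not close either of the two gaps that make this a conjecture. The central one is oscillation of $z(t)$ for class (ii). Your Poincar\'e--Bendixson plan has two concrete problems. First, the dilation $\delta_{\lambda}$ is a symmetry of $\{H=0\}$ only up to time reparametrization ($H\circ\delta_{\lambda}=\lambda^{-2}H$, and solutions map to solutions only after rescaling time), so the quotient by $\mathbb{R}_{>0}\times SO(2)$ carries a direction field rather than a flow; moreover, after fixing the conserved values $p_s=J$ and $p_{\theta}$ and quotienting by the $(s,\theta)$-translations, Proposition \ref{coord H} shows the reduced set is the curve $\{H=0\}$ in the $(u,p_u)$-plane, essentially one-dimensional. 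Recurrence in that quotient is then nearly automatic and proves nothing: the actual difficulty is whether the orbit returns to $\{u=0\}$ \emph{in finite time} in the original parametrization, rather than asymptoting toward the $z$-axis ($u\to\pm\pi/2$), toward collision, or taking infinite time while $s\to\pm\infty$ along the noncompact group directions. A topological argument on the quotient cannot see the time parametrization or these noncompact escapes, which is precisely why the authors could not settle Conjecture \ref{conj z osc} with comparison-theorem or soft methods.

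The other two classes are also not dispatched. For class (iii), a limit of self-similar orbits need not be self-similar: you would need uniform control of the fundamental time domains $[t_0^n,t_2^n]$ (neither collapsing nor degenerating) and of the factors $(\lambda_n,\varphi_n)$, plus continuity of the flow through the instant the limit orbit crosses the $z$-axis, where the adapted coordinates underlying Lemma \ref{endpts} and Theorem \ref{main thm} are singular; extracting convergent subsequences via Proposition \ref{J bdd} bounds $J$ only, not these quantities. For class (iv) you correctly note that $z(t)$ is monotone, so (A3) fails and the notion of self-similarity in Theorem \ref{main thm} (built from consecutive zeros of $z$) does not even apply; proposing to find a relative equilibrium of a dilation-translation flow is a reasonable research direction but is not an argument. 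So the proposal is a plausible program whose hard steps coincide with the paper's open conjectures, and none of those steps is actually proved here.
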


\section{New Coordinates}\label{coords}
Dilation by $\lambda>0$ and rotation about $z$-axis by $\varphi$ are important transformations of configuration space $\mathcal H$.  These naturally lift to phase space $T^*\mathcal H$ as the maps $\delta_{\lambda}$ and $\rho_{\varphi}$.  The former is given in (\ref{dilation}), and the latter by
$$\rho_{\varphi}(x, y, z, p_x, p_y, p_z) = \begin{bmatrix}
\cos \varphi & -\sin \varphi & 0 & 0 & 0 & 0 \\
\sin \varphi & \cos \varphi  & 0 & 0 & 0 & 0 \\
0 & 0 & 1 &  0 & 0 & 0 \\
0 & 0 & 0 & \cos \varphi & -\sin \varphi & 0 \\
0 & 0 & 0 & \sin \varphi & \cos \varphi & 0 \\
0 & 0 & 0 & 0 & 0 & 1 \\
\end{bmatrix}
\begin{bmatrix}
x \\ y \\ z \\ p_x \\ p_y \\ p_z
\end{bmatrix}.
$$

Both maps are symplectomorphisms of $T^*\mathcal H$, and they satisfy $H \circ \rho_{\varphi} = H$.
 and  $H \circ \delta_{\lambda} =\lambda^{-2} H$.  In the $H=0$ case, both are symmetries, so we look for explicit coordinates in which the corresponding moment maps are momentum coordinates.  
 Since the rotation and dilation actions commute, coordinates $(s, \theta, u)$ exist where dilations correspond to a shift in $s$, rotations to a shift in $\theta$, and $u$ is invariant under these actions. 
 For rotations, usual polar coordinates $(r, \theta)$ in the plane work, as $p_{\theta}$ is the desired conserved quantity of angular momentum.  However, finding coordinates with this property for $J$ rather than $p_{\theta}$ is much more difficult.  
 
We begin with position coordinates $(s, \theta, u)$ and insist that $p_s=J$. 
Here $s$ and $u$ are unknowns to be solved for, and $\theta = \arg(x,y)$ is the usual angle coordinate in the plane.  
 For convenience we use the notation $w=4z$ and $R=r^2$. 
Demanding canonical coordinates and $p_s=J$ yields the following system of first-order, nonlinear PDEs:
\begin{align*}
2R \left( \frac{\partial s}{\partial R} \frac{\partial u}{\partial w} - \frac{\partial u}{\partial R} \frac{\partial s}{\partial w} \right) &= \frac{\partial u}{\partial w}\\
2w \left( \frac{\partial s}{\partial R} \frac{\partial u}{\partial w} - \frac{\partial u}{\partial R} \frac{\partial s}{\partial w} \right) &= -\frac{\partial u}{\partial R}.
\end{align*}
Some algebra shows that $du$ must annihilate the radial vector field, so we choose $u=\arg(R,w)$.
This allows our system to be reduced to the first-order, linear PDE
$$R\frac{\partial s}{\partial R}+w\frac{\partial s}{\partial w}=\frac{1}{2} , $$  
which can be solved for $s$ by the method of characteristics.

Thus the following coordinates have the desired properties:
\begin{align*} 
s &= \frac{1}{4}\log ((x^2+y^2)^2+16z^2) \\
\theta &= \arg(x,y) \\ 
u &= \arg(x^2+y^2, 4z) \\ 
p_s &= xp_x+yp_y+2zp_z \\
p_{\theta} &= xp_y-yp_x \\ 
p_u &=  \frac{1}{4}p_z(x^2+y^2) -2z\frac{xp_x+yp_y}{x^2+y^2}.
\end{align*}
These are not global coordinates on $T^*\mathcal H$.  Clearly $s$ is singular at the origin, but this represents collision with the sun and is actually a singularity for our system (technically, our phase space should exclude the origin).  However, $\theta$ is not defined for points on the $z$-axis, so we must exclude this set from our chart.  The $z$-axis is special in the Heisenberg group: it is the cut and conjugate locus.  See Section \ref{main section} for further discussion.
Note that $u$ represents an inclination angle, with $\arg$ here taking values in $(-\pi/2, \pi/2)$, and that $z=0$ if and only if $u=0$.

By construction, these are especially nice coordinates to use, and they greatly simplify the expression of the rotation and dilation actions.
One easily checks by direct calculation that the following two results hold.

\begin{proposition}\label{coord maps}
We have that $(s, \theta, u, p_s, p_{\theta}, p_u)$ are canonical coordinates on $T^*\mathcal H$ and that  $p_s=J.$  Moreover, we have 
\begin{align*}
\rho_{\varphi}: &\theta \mapsto \theta + \varphi \\
\delta_{\lambda}: & s \mapsto s+\log \lambda
\end{align*}
with all other coordinates unaffected by these maps.
\end{proposition}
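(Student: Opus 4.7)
The plan is to prove the three claims in order by direct computation, exactly as the proposition suggests.

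The identity $p_s = J$ is immediate: the defining formula for $p_s$ in the coordinate list coincides term-for-term with equation (\ref{J}).

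For canonicity, I would verify the Liouville 1-form identity
\begin{equation*}
p_x\,dx + p_y\,dy + p_z\,dz \;=\; p_s\,ds + p_\theta\,d\theta + p_u\,du,
\end{equation*}
from which symplectic equivalence follows by taking the exterior derivative of both sides. Writing $R = x^2+y^2$ and $w = 4z$ as in the derivation, one computes
\begin{equation*}
ds = \frac{R\,dR + w\,dw}{2(R^2+w^2)}, \qquad d\theta = \frac{x\,dy - y\,dx}{R}, \qquad du = \frac{R\,dw - w\,dR}{R^2+w^2}.
\end{equation*}
Substituting these, along with the defining expressions for $p_s$, $p_\theta$, and $p_u$, into the right-hand side and collecting the coefficients of $dx$, $dy$, $dz$, one checks that they reduce to $p_x$, $p_y$, $p_z$ respectively. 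This is the bulk of the work; it is routine but somewhat tedious because $p_u$ depends on all six of the original variables.

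For the group actions I would substitute the formulas (\ref{dilation}) and the matrix for $\rho_\varphi$ directly into each new coordinate. Under $\rho_\varphi$, the scalars $R$, $z$, $p_z$, $xp_x + yp_y$, and $xp_y - yp_x$ are all rotation-invariant, so $s$, $u$, $p_s$, $p_\theta$, $p_u$ are fixed, while $\theta \mapsto \theta + \varphi$ by the definition of $\arg$. Under $\delta_\lambda$, both $R$ and $w$ scale by $\lambda^2$, so $\arg(R,w) = u$ and $\arg(x,y) = \theta$ are unaffected, while $R^2 + w^2 \mapsto \lambda^4(R^2+w^2)$ pushes $s$ to $s + \log\lambda$; the mixed position--momentum rescalings of (\ref{dilation}) precisely cancel in each of $p_s$, $p_\theta$, $p_u$, so these three are also fixed.

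I expect the algebraic collapse in the canonicity step to be the only nontrivial part of the argument. Once one organizes the substitution by grouping the coefficients of $dR$, $dw$, and $x\,dy - y\,dx$ separately, the verification reduces to three scalar identities, and no conceptual obstacle remains.
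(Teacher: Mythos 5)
Your proposal is correct, and it is essentially the paper's argument: the paper simply asserts these facts follow "by direct calculation," and your verification via the tautological one-form identity $p_x\,dx+p_y\,dy+p_z\,dz=p_s\,ds+p_\theta\,d\theta+p_u\,du$ (whose differentials and collapse you have set up correctly, as the identity does hold) together with direct substitution of $\rho_\varphi$ and $\delta_\lambda$ is a clean realization of exactly that check.
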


\begin{proposition}\label{coord H}
In $(s, \theta, u, p_s, p_{\theta}, p_u)$ coordinates, our Hamiltonian takes the form
$$
H(s, \theta, u, p_s, p_{\theta}, p_u)= \exp\left(-2s\right)\left(\frac{1}{2}\left[\begin{array}{ccc}
p_{s} & p_{\theta} & p_{u}\end{array}\right]\left[\begin{array}{ccc}
\cos u & \sin u & 0\\
\sin u & \sec u & 2\cos u\\
0 & 2\cos u & 4\cos u
\end{array}\right]\left[\begin{array}{c}
p_{s}\\
p_{\theta}\\
p_{u}
\end{array}\right]-\frac{1}{4\pi}\right).
$$
\end{proposition}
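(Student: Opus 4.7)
The plan is direct substitution in two stages: pass first to polar coordinates $(r, \theta, z, p_r, p_\theta, p_z)$, in which $\theta$ and $p_\theta$ already decouple cleanly, and then convert the $(r, z, p_r, p_z)$ block to $(s, u, p_s, p_u)$. The potential is essentially free: the definition of $s$ gives $e^{4s} = (x^2+y^2)^2 + 16z^2$, hence $\sqrt{(x^2+y^2)^2+16z^2} = e^{2s}$, and $U = -\tfrac{1}{8\pi}e^{-2s}$, matching the constant term after $e^{-2s}$ is factored out.

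For the kinetic part, expand $K = \tfrac{1}{2}(P_X^2 + P_Y^2)$ using $p_x^2 + p_y^2 = p_r^2 + p_\theta^2/r^2$ and $xp_y - yp_x = p_\theta$ to obtain
\[
2K = p_r^2 + \frac{p_\theta^2}{r^2} + p_\theta p_z + \frac{r^2}{4}p_z^2.
\]
The cross-term $p_\theta p_z$ is the fingerprint of the Heisenberg contact structure and will be responsible for the off-diagonal entries of $M$ involving $\sin u$ and $\cos u$. Next I would invert the coordinate change on the $(r,z)$--block. On the position side, the definitions of $s$ and $u$ give $r^2 = e^{2s}\cos u$ and $4z = e^{2s}\sin u$. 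On the momentum side, the defining equations
\[
p_s = r p_r + 2z p_z, \qquad p_u = \tfrac{r^2}{4}p_z - \tfrac{2z}{r}p_r
\]
form a $2\times 2$ linear system in $(p_r, p_z)$ with determinant $\tfrac{e^{4s}}{4r}$; inversion yields
\[
p_r = e^{-s}\sqrt{\cos u}\,\bigl(\cos u\, p_s - 2\sin u\, p_u\bigr), \qquad p_z = 2e^{-2s}\bigl(\sin u\, p_s + 2\cos u\, p_u\bigr).
\]

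Substituting the position and momentum relations into the polar expression for $2K$ and collecting terms, the $p_s p_u$ contributions from $p_r^2$ and $\tfrac{r^2}{4}p_z^2$ cancel, while $\sin^2 u + \cos^2 u = 1$ collapses the $p_s^2$ and $p_u^2$ diagonal coefficients to $\cos u$ and $4\cos u$. The $p_\theta$ cross-terms fall into place directly from the formula for $p_z$: $p_\theta p_z$ contributes $2\sin u\, p_s p_\theta + 4\cos u\, p_\theta p_u$, and $p_\theta^2/r^2$ contributes $\sec u\, p_\theta^2$. This reproduces exactly the symmetric matrix $M$, and combining with $U$ gives the claimed formula. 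The main obstacle is entirely bookkeeping; the one step that requires care is tracking signs so that the $p_s p_u$ contributions actually cancel rather than double.
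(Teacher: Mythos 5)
Your route---polar coordinates first, then inverting the $(p_r,p_z)\mapsto(p_s,p_u)$ block and substituting---is exactly the ``direct calculation'' the paper invokes (it gives no further detail), and the algebra checks out: $2K = p_r^2 + p_\theta^2/r^2 + p_\theta p_z + \tfrac{r^2}{4}p_z^2$ is correct, the position relations $r^2=e^{2s}\cos u$, $4z=e^{2s}\sin u$ and the momentum inversion $p_r = e^{-s}\sqrt{\cos u}\,(\cos u\,p_s - 2\sin u\,p_u)$, $p_z = 2e^{-2s}(\sin u\,p_s + 2\cos u\,p_u)$ are right, the $p_sp_u$ cross terms from $p_r^2$ and $\tfrac{r^2}{4}p_z^2$ do cancel, and the result is $K=\tfrac12 e^{-2s}\,p^{T}Mp$ with precisely the stated matrix.

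The one place you went too fast is the potential. Your own (correct) computation gives $U=-\tfrac{1}{8\pi}e^{-2s}$, but the displayed formula, with the $\tfrac12$ multiplying only the quadratic form, has constant term $-\tfrac{1}{4\pi}e^{-2s}$; these differ by a factor of $2$, so it does not ``match the constant term after $e^{-2s}$ is factored out.'' What you have actually proved is $H = e^{-2s}\bigl(\tfrac12 p^{T}Mp - \tfrac{1}{8\pi}\bigr)$, equivalently $H = \tfrac12 e^{-2s}\bigl(p^{T}Mp - \tfrac{1}{4\pi}\bigr)$. This is the form the paper itself relies on later: in Lemma \ref{J bounds} the condition $H=0$ is reduced to the quadratic $4\cos(u)p_u^2+4\cos(u)p_\theta p_u+\cos(u)p_s^2+2\sin(u)p_sp_\theta+\sec(u)p_\theta^2=\tfrac{1}{4\pi}$, i.e.\ $p^{T}Mp=\tfrac{1}{4\pi}$, which agrees with your computation and not with the proposition as printed. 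So the discrepancy is a misprint in the statement (the $\tfrac12$ should multiply the whole bracket, or the constant should read $\tfrac{1}{8\pi}$); your calculation is sound, but a careful write-up should flag this factor of two rather than assert a match.
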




\section{Theorem, Consequences, Pictures}
\label{thm, cors, pics}

Recall that here we are only interested in orbits with $H=0$, for which the dilational momentum $J$ is an integral of motion.
In order to state the main theorem, Theorem \ref{main thm}, it remains to compute the appropriate dilation, rotation, and time-reparametrization factors.  Details, along with the proof, appear in Section \ref{proof}.  The coordinates $(s, \theta, u)$ and the maps $\rho_{\varphi}$ and  $\delta_{\lambda}$ are defined above in Section \ref{coords}.   Consequently, Theorem \ref{main result} is a paraphrasing of the following.

\begin{theorem}\label{main thm}
Assume $c:I \to T^*\mathcal H$ is a solution to Hamilton's equations on a maximal domain $I$.  Assume $H(c(t))=0$ for all $t$ and $c$ does not meet the $z$-axis.  Suppose $z(t)$ has consecutive zeros at $t_0, t_1, t_2$.
Let 
\begin{align*}
\lambda &= \exp (s(t_2) - s(t_0)) \\
\varphi &= \theta(t_2) - \theta(t_0)\\
\xi(t) &=\frac{1}{2}\log_{\lambda}\left(1-(t-t_0)\frac{1-\lambda^2}{t_2-t_0}  \right)  \\
\tau(t) &= t_0+(t_2-t_0)\frac{1-\lambda^{2\xi(t) + 2\lfloor \xi(t) \rfloor}}{1-\lambda^2}\\
\end{align*}
where we take $\xi(t)=\tau(t)=t$ if $\lambda=1$.
Then  for any time $t \in I$ we have
$$ c(t) = \rho_{\varphi} \circ \delta_{\lambda} \circ c \circ \tau(t).$$
\end{theorem}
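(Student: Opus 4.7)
The plan is to work throughout in the adapted coordinates of Section~\ref{coords}. By Proposition~\ref{coord H} the Hamiltonian factors as $H = e^{-2s}G$, where $G(u, p_s, p_\theta, p_u) = \tfrac{1}{2}p^{T}M(u)p - \tfrac{1}{4\pi}$ is independent of $s$ and $\theta$ and vanishes on the orbit by (A1). Since $s$ and $\theta$ are cyclic, both $p_s = J$ (via $\dot J = 2H = 0$) and $p_\theta$ are constants of motion along the solution. My key move is the time reparametrization $d\tilde t = e^{-2s(t)}\, dt$, which is well defined on $I$ thanks to (A2); in $\tilde t$-time Hamilton's equations for $H|_{\{H=0\}}$ become the autonomous flow generated by $G$, and the subsystem $(u, p_u)$ decouples into a one-degree-of-freedom Hamiltonian flow on the level curve $\{G = 0\}$, with $p_s, p_\theta$ as parameters.

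Because $G$ is quadratic in $p_u$, this level curve meets $\{u = 0\}$ in at most two points, so three zeros of $u$ (equivalently of $z$, by the remark that $z = 0 \iff u = 0$ and by (A3)) force the $(u, p_u)$-orbit onto a closed connected component of $\{G = 0\}$, making it periodic in $\tilde t$ with period $T = \tilde t(t_2) - \tilde t(t_0)$. Since $ds/d\tilde t = \partial G/\partial p_s$ and $d\theta/d\tilde t = \partial G/\partial p_\theta$ are functions only of $(u, p_u)$ (with $p_s, p_\theta$ fixed), they are $T$-periodic, and integrating over one period yields the constant increments $s(\tilde t + T) - s(\tilde t) = \log\lambda$ and $\theta(\tilde t + T) - \theta(\tilde t) = \varphi$ for every $\tilde t$. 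Combined with the $T$-periodicity of the remaining variables and the transformation laws of Proposition~\ref{coord maps}, this delivers the clean self-similarity in reparametrized time:
\[
\tilde c(\tilde t + T) \;=\; \rho_\varphi \circ \delta_\lambda \circ \tilde c(\tilde t) \qquad \text{for all } \tilde t.
\]

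The final step, which I expect to be the main technical obstacle, is converting this $\tilde t$-identity back to the stated formula in $t$-time. Using $dt/d\tilde t = e^{2s(\tilde t)}$ together with $s(\tilde t + T) = s(\tilde t) + \log\lambda$, the integrand scales by exactly $\lambda^{2}$ between consecutive periods, so $t_{2n+2} - t_{2n} = \lambda^{2n}(t_2 - t_0)$; a geometric-series summation then gives $t_{2n} - t_0 = (t_2 - t_0)(1 - \lambda^{2n})/(1 - \lambda^{2})$. Extending this relation continuously recovers precisely the defining equation $\lambda^{2\xi(t)} = 1 - (t-t_0)(1-\lambda^2)/(t_2-t_0)$ for $\xi$, and the formula for $\tau$ is then produced by translating the shift $\tilde t \mapsto \tilde t - T$ back through the change of variables on each successive period, with $\lfloor \xi(t) \rfloor$ tracking which period contains $t$. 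The case $\lambda = 1$ reduces the stated convention to a triviality, and one must finally verify the identity on all of the maximal domain $I$, including the approach to finite-time collision (for $\lambda \neq 1$), signaled by $\xi(t) \to \infty$ as $t \to t_0 + (t_2 - t_0)/(1 - \lambda^{2})$.
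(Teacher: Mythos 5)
Your proposal is correct in substance but follows a genuinely different route from the paper. You conformally rescale time via $d\tilde t = e^{-2s}\,dt$, so that on $\{H=0\}$ the flow becomes the flow of $G=e^{2s}H$, which in the adapted coordinates has a decoupled one-degree-of-freedom $(u,p_u)$ subsystem; three zeros of $u$ plus the fact that $\{G=0\}\cap\{u=0\}$ contains at most two points then force $(u,p_u)(\tilde t_2)=(u,p_u)(\tilde t_0)$, hence $T$-periodicity of the reduced orbit, and the quadratures $ds/d\tilde t=\partial G/\partial p_s$, $d\theta/d\tilde t=\partial G/\partial p_\theta$ give the constant per-period increments $\log\lambda$ and $\varphi$. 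The paper instead proves the single endpoint identity $c(t_2)=\rho_\varphi\circ\delta_\lambda\circ c(t_0)$ (Lemma \ref{endpts}) by a transversality and sign-alternation argument at $u=0$, which requires the bound $|J|\le \tfrac{1}{2\sqrt\pi}$ (Proposition \ref{J bdd}) and a separate treatment of the degenerate case $\tfrac{1}{4\pi}-p_s^2=0$; it then pastes dilated, rotated, reparametrized copies of $c|_{[t_0,t_2]}$ together, using the degree-two homogeneity $H\circ\delta_\lambda=\lambda^{-2}H$ to see each piece solves Hamilton's equations, and concludes by Picard--Lindel\"of. Your reduction buys a cleaner core: the pigeonhole on the reduced level curve absorbs both the $J$-bound and the degenerate double-root case, and the relation $dt/d\tilde t=e^{2s}$ explains the $\lambda^2$ scaling of successive fundamental time domains directly, which is exactly what underlies $\xi$ and $\tau$; the paper's construction, by contrast, is more explicit about the reparametrization formulas and hands you the corollaries (e.g.\ the collision time) for free. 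Two points you have only gestured at and must still write out: (i) the pigeonhole step should be argued carefully (if the returns at $\tilde t_0,\tilde t_1$ coincided, periodicity with the shorter period still forces the $\tilde t_0,\tilde t_2$ points to agree, so $T$ is in any case a period), and (ii) the passage from the identity on the image of $I$ in $\tilde t$-time to the claim at every $t\in I$ requires the same uniqueness-plus-maximality argument the paper invokes, since one must know the reconstructed globally-defined-in-$\tilde t$ solution pulls back to $c$ on all of $I$ so that $\tau(t)$ again lies in $I$.
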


We refer to the interval $[t_0, t_2]$ as a \emph{fundamental time domain}. 
The functions $\xi$ and $\tau$ are described more intuitively in Section \ref{constructing}; the symbol $\lfloor \xi(t) \rfloor$ denotes the floor function.
In Figure \ref{2cols} we show examples of self-similar orbits from each of the three families in Table \ref{table}.  In Figure \ref{4cols} we show an example of numerical verification of the self-similarity in phase space.
This type of numerical investigation preceded and facilitated both the statement and proof of Theorem \ref{main thm}.

\begin{figure}[h]
\centering 
\includegraphics[width=.78\textwidth]{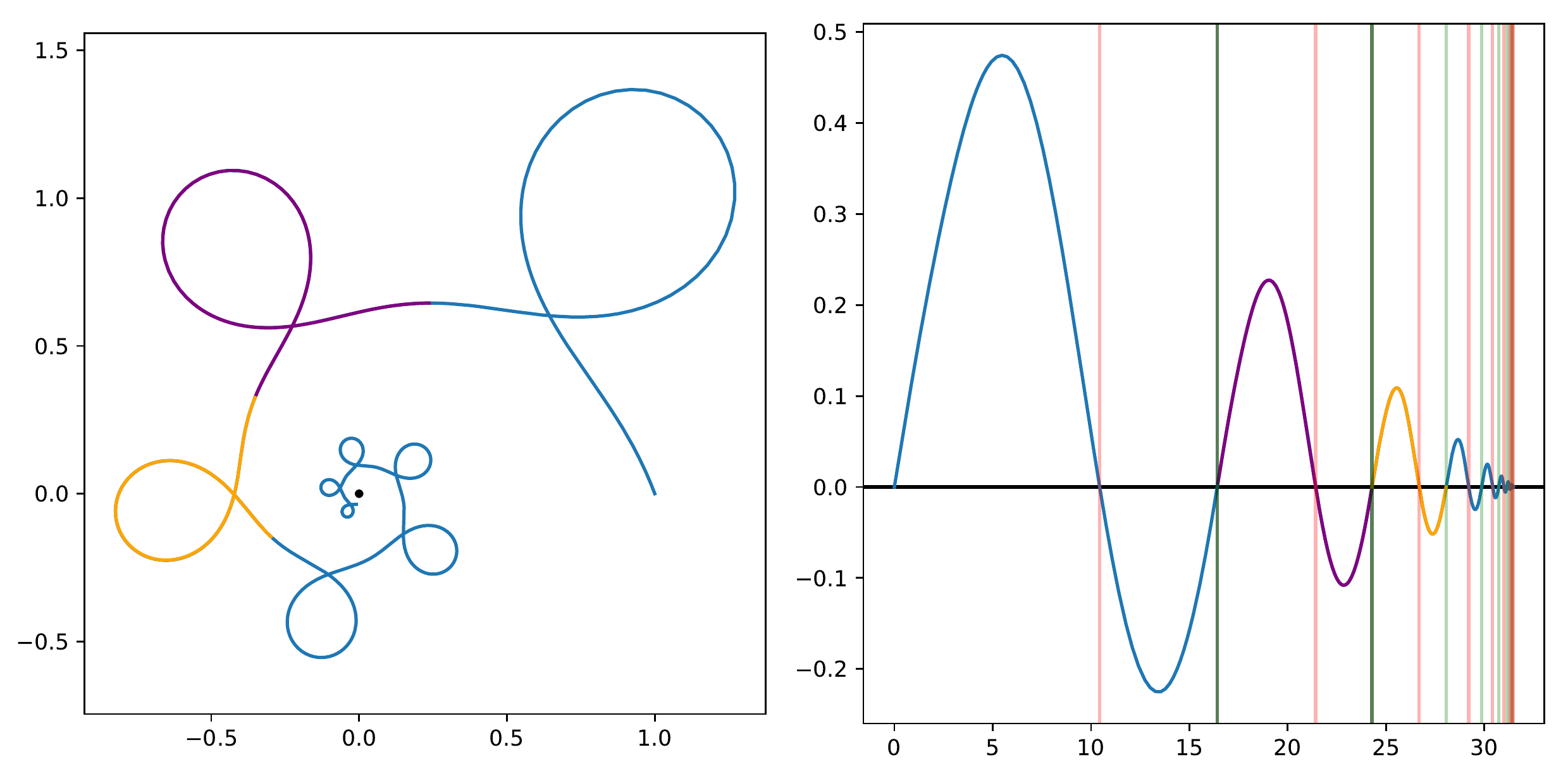} \\
\includegraphics[width=.78\textwidth]{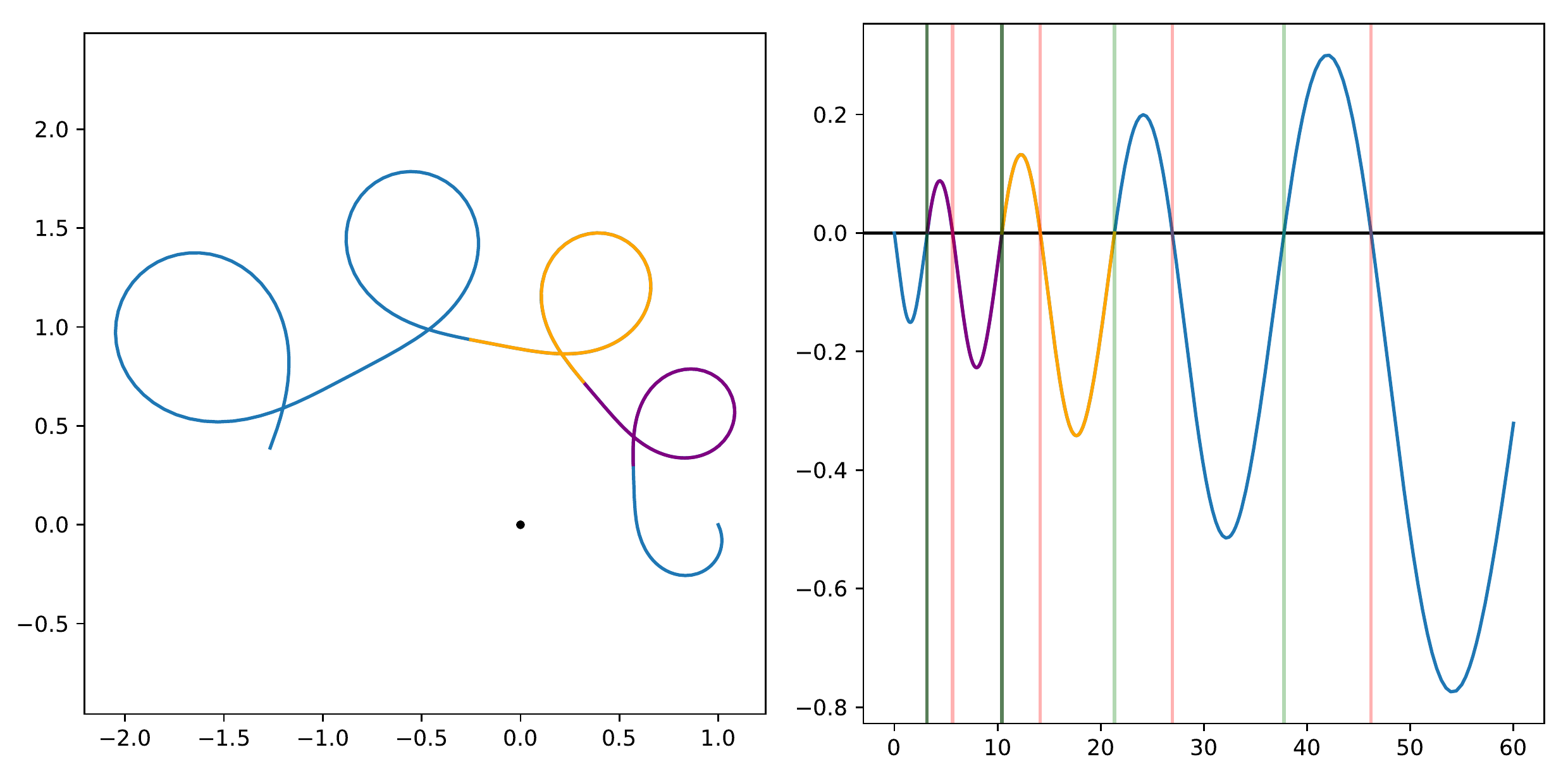} \\
\includegraphics[width=.78\textwidth]{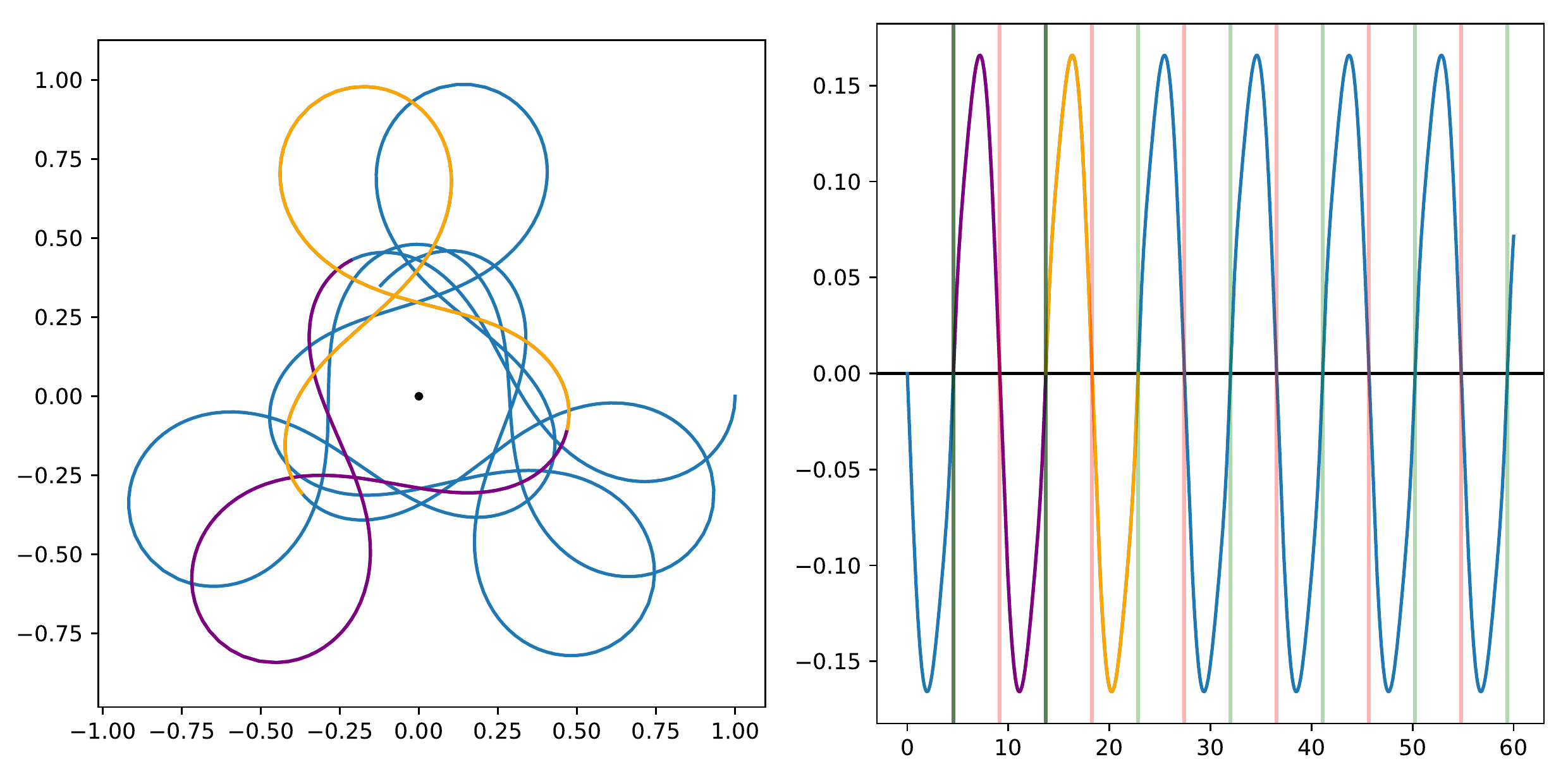}
\caption{Rows depict sample orbits with $\lambda<1$ , $\lambda>1$, and $\lambda=1$, respectively.  See Table \ref{table}.  Colors and columns as in Figure \ref{selfsim21}. 
Vertical lines show zeros of $z(t)$.  Fundamental time domains lie between three successive zeros.  Successive domains scale by $\lambda^2$.}
\label{2cols}
\end{figure}

We now give some interesting consequences of Theorem \ref{main thm};  recall from Section \ref{main section} that \textit{A-orbits} are those satisfying the hypotheses of the Theorem.

\begin{corollary}\label{quasi}
Every $A$-orbit with $J=0$ is quasi-periodic.
\end{corollary}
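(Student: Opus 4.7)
The plan is to show that when $J=0$ the dilation factor $\lambda$ in Theorem \ref{main thm} is exactly $1$, which collapses the self-similarity statement into honest quasi-periodicity. Since $\dot J = 2H = 0$ on a zero-energy orbit, $J = p_s$ is identically zero, and Proposition \ref{coord H} then gives
\[
\dot s \;=\; \frac{\partial H}{\partial p_s} \;=\; p_\theta\, e^{-2s}\sin u,
\]
so the whole task reduces to showing $s(t_2)=s(t_0)$.

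I would argue this via the planar phase portrait in $(u,v)$ with $v := p_\theta + 2p_u$. Hamilton's equations give $\dot u = 2e^{-2s}v\cos u$, and the constraint $H=0$ combined with $p_s=0$ reduces (after a short algebraic manipulation) to $v^2(u) = \frac{1}{2\pi\cos u}-p_\theta^2\tan^2 u$, which is an even function of $u$. Hence the turning points $\dot u=0$ form a unique symmetric pair $\pm u_{\mathrm{ext}}\in(-\pi/2,\pi/2)$, and on $[t_0,t_2]$ the variable $u$ sweeps through the four monotone arcs $0\to u_{\mathrm{ext}}\to 0\to -u_{\mathrm{ext}}\to 0$. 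Writing $ds/du = p_\theta\tan u/(2v)$ and integrating across the four arcs, the evenness of $v^2$ together with the oddness of $\tan u$ makes the contributions from $[t_0,t_1]$ and $[t_1,t_2]$ cancel exactly, so $s(t_2)=s(t_0)$ and $\lambda=1$.

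With $\lambda=1$ I will upgrade to quasi-periodicity by checking that all coordinates of $c(t_2)$ and $c(t_0)$ agree up to $\rho_\varphi$, then propagating by uniqueness. At both $t_0$ and $t_2$ we have $u=0$ and $p_s=0$; $p_\theta$ is conserved; and at each of these zeros of $z$ the next subinterval has $u$ increasing into positive values, forcing $v(t_0)=v(t_2)=+1/\sqrt{2\pi}$ and hence $p_u(t_0)=p_u(t_2)$. Combined with $s(t_2)=s(t_0)$ and $\theta(t_2)-\theta(t_0)=\varphi$, this gives $c(t_2)=\rho_\varphi c(t_0)$ in the coordinates of Section \ref{coords}. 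Setting $T := t_2 - t_0$, the curve $\tilde c(t) := \rho_{-\varphi}\,c(t+T)$ is also a solution of Hamilton's equations (since $\rho_{-\varphi}$ preserves $H$ and the flow is autonomous) and it coincides with $c$ at $t=t_0$; uniqueness of ODE solutions then yields $c(t+T)=\rho_\varphi c(t)$ for all $t\in I$, which is quasi-periodicity.

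The main obstacle is the sign-bookkeeping in the four-arc integral of $ds/du$: the two branches $v=\pm\sqrt{v^2(u)}$ must be consistently matched with whether $u$ is increasing or decreasing on each arc, and the substitution $u\mapsto -u$ on $[t_1,t_2]$ must be executed so that the two halves cancel rather than reinforce one another.
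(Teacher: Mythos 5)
Your proposal is correct, but it takes a genuinely different route from the paper, and in one respect it does more. The paper's proof is a one-liner riding on Theorem \ref{main thm}: when $\lambda=1$ the reparametrization $\tau$ is the identity and $\delta_\lambda$ is trivial, so the self-similarity relation collapses to periodicity modulo $\rho_\varphi$; the identification of the $J=0$ stratum with $\lambda=1$ is the content of Table \ref{table} and is not argued analytically anywhere in the text. You bypass Theorem \ref{main thm} entirely and instead prove that identification: with $p_s\equiv 0$, the reduction to the $(u,v)$ plane with $v=p_\theta+2p_u$ is right ($\dot u=2e^{-2s}v\cos u$, $v^2=\tfrac{1}{2\pi\cos u}-p_\theta^2\tan^2 u$, $ds/du=p_\theta\tan u/(2v)$, with the $e^{-2s}$ factors cancelling in the ratio), the level curve is a single oval symmetric under $u\mapsto -u$ and $v\mapsto -v$ with unique turning points $\pm u_{\mathrm{ext}}$, and the four-arc parity bookkeeping does work out: the two arcs with $u>0$ each contribute the same amount $A$ to $\Delta s$ and the two arcs with $u<0$ each contribute $-A$, so $s(t_2)=s(t_0)$ and $\lambda=1$. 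Your endpoint-matching step is essentially the paper's Lemma \ref{endpts} specialized to $p_s=0$ (your alternation of the sign of $\dot u$ at successive transversal crossings is the $\gamma(t)$ argument there), and your final conjugation-plus-uniqueness step replaces the paper's pasting construction of the extended curve $C$; both are sound. What your route buys is exactly the step the paper leaves implicit, namely $J=0\Rightarrow\lambda=1$; what the paper's route buys is brevity and a uniform treatment of all three strata at once. To tighten your write-up: dispose of $p_\theta=0$ explicitly (then $v^2=\tfrac{1}{2\pi\cos u}>0$ everywhere, $u$ is monotone, and $z$ cannot have three discrete zeros, so this case is vacuous among the $A$-orbits considered); check that $(\pm u_{\mathrm{ext}},0)$ is not an equilibrium of the reduced dynamics (indeed $\dot p_u=-\partial H/\partial u\neq 0$ there), so the orbit rounds the turning points rather than stalling and each half-lobe contains exactly one extremum of $u$; and note that the quadrature integrals are improper but convergent at $u=\pm u_{\mathrm{ext}}$.
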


\begin{proof}
If $\lambda=1$, then $\tau(t)=t$ and there is no time-reparametrization involved, so the orbit is periodic modulo a transformation of space, namely  $\rho_{\varphi} \circ \delta_{\lambda}$.
\end{proof}

\begin{corollary}\label{finite time}
If an $A$-orbit collides, it does so in finite time.
\end{corollary}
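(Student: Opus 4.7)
The plan is to iterate Theorem \ref{main thm} and sum the resulting geometric series of fundamental time-domain lengths. Let $c$ be an A-orbit with dilation factor $\lambda = \exp(s(t_2) - s(t_0))$; a collision corresponds to $(x,y,z) \to (0,0,0)$ along the orbit, equivalently $s \to -\infty$, at an endpoint of the maximal domain $I$.

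If $\lambda = 1$, then by Corollary \ref{quasi} the orbit is quasi-periodic: its configuration-space image is the $\rho_\varphi$-orbit of the compact arc $c([t_0,t_2])$, which is a bounded set avoiding the origin, so no collision can occur and the corollary holds vacuously. Otherwise assume $\lambda < 1$ (the case $\lambda > 1$ is handled symmetrically by time reversal). Iterating the self-similarity, the orbit decomposes into forward fundamental time domains $[t_{2n}, t_{2n+2}]$ for $n \geq 0$, with successive durations $(t_2-t_0)\lambda^{2n}$ and with the $s$-coordinate shifted by $n\log\lambda$ from cycle to cycle. This geometric scaling reflects the Carnot rescaling $H \circ \delta_\lambda = \lambda^{-2}H$, under which $\delta_\lambda$ conjugates the Hamiltonian flow with time rescaled by $\lambda^2$ on the zero-energy level set. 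Summing,
$$\sum_{n=0}^{\infty}(t_2-t_0)\lambda^{2n} = \frac{t_2-t_0}{1-\lambda^2} < \infty,$$
while $s(t_{2n}) = s(t_0) + n\log\lambda \to -\infty$, so the orbit reaches the sun at this finite forward time.

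The only bookkeeping step is verifying the geometric scaling of successive fundamental-domain lengths, which is encoded in the formula for $\tau$ via the floor function $\lfloor \xi(t)\rfloor$ indexing cycles; this is routine and poses no real obstacle. The corollary is essentially an immediate consequence of geometric-series convergence together with the scale invariance of the zero-energy dynamics.
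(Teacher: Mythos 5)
Your proposal is correct and follows essentially the same route as the paper: consecutive fundamental time domains scale by $\lambda^2$, so the time to collision is the convergent geometric series $\sum_{n\ge 0}(t_2-t_0)\lambda^{2n}=\frac{t_2-t_0}{1-\lambda^2}$, giving a finite collision time. The extra details you supply (collision as $s\to-\infty$ with $s(t_{2n})=s(t_0)+n\log\lambda$, the vacuous $\lambda=1$ case, and time reversal for $\lambda>1$) are just a fuller writing-out of what the paper cites from Theorem \ref{main thm} and Corollary \ref{quasi}.
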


\begin{proof}
By definition of $\tau$, consecutive fundamental time domains scale by $\lambda^2$.  By Theorem \ref{main thm}, a zero-energy orbit suffers a collision in future time if and only if $\lambda <1$.  For such an orbit, the time to collision is given by $\sum_{n=0}^{\infty} (t_2-t_0) \lambda^{2n}$, a convergent geometric series.  Thus the collision time is
\begin{equation}
t_{\text{col}}=t_0+\frac{t_2-t_0}{1-\lambda^2}.
\end{equation}
\end{proof}

\begin{corollary}\label{table cor}
 $A$-orbits stratify into the families described in Table \ref{table}.
\end{corollary}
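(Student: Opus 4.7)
The plan is to show that the sign of the conserved quantity $J$ partitions $A$-orbits into exactly the three behavioral classes of Table \ref{table}. Since $\dot J = 2H = 0$ on the zero-energy surface, $J$ is constant along any $A$-orbit, so the trichotomy $J<0$, $J=0$, $J>0$ is a well-defined property of orbits rather than of points.

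The principal step is to identify $\operatorname{sgn}(\log \lambda)$ with $\operatorname{sgn}(J)$, where $\lambda = \exp(s(t_2)-s(t_0))$ is the scaling factor of Theorem \ref{main thm}. Since $J = p_s$ by Proposition \ref{coord maps} and $\log \lambda = \int_{t_0}^{t_2} \dot s\, dt$, one possible route is a direct evaluation of $\dot s = \partial H/\partial p_s = e^{-2s}(J\cos u + p_\theta \sin u)$ using Proposition \ref{coord H}, combined with the constraint $H=0$. A more structural argument observes that on $\{H=0\}$ the dilation flow generated by $J$ commutes with the Hamiltonian flow, so self-similarity over a fundamental domain forces $\log \lambda$ to carry the same sign as the dilation generator $J$. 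I anticipate this sign correspondence to be the main point requiring technical care; everything else follows formally from results already in hand.

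Once the correspondence is established, the classification is immediate. If $J=0$, then $\lambda = 1$, and Corollary \ref{quasi} gives quasi-periodicity without collision. If $J<0$, then $\lambda < 1$, and Corollary \ref{finite time} gives collision in finite future time. For the remaining case $J>0$, I would invoke the time-reversal involution $t \mapsto -t$, $(x,y,z,p_x,p_y,p_z) \mapsto (x,y,z,-p_x,-p_y,-p_z)$; since $H$ is even in the momenta, this is a symmetry of Hamilton's equations. It preserves $\{H=0\}$, non-intersection with the $z$-axis, and the set of zeros of $z(t)$, so it sends $A$-orbits to $A$-orbits, while negating $J$. Applying Corollary \ref{finite time} to the time-reversed orbit yields a collision in finite forward time there, which is a collision in finite past time for the original orbit. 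The three families thus exhaust and disjointly cover the set of $A$-orbits, matching the rows of Table \ref{table}.
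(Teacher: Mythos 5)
Your handling of the behavior column is essentially the paper's argument: quasi-periodicity for $\lambda=1$ via Corollary \ref{quasi}, finite-time future collision for $\lambda<1$ via Corollary \ref{finite time}, and the remaining case by time reversal, which is exactly the paper's remark that reversing the flow of time inverts $\lambda$; your explicit involution (negating the momenta, hence negating $J$ and replacing $\lambda$ by $\lambda^{-1}$, while preserving the $A$-orbit conditions) is a correct expansion of that remark. The genuine gap is the step you yourself designate as principal: the equivalence $\operatorname{sgn}(J)=\operatorname{sgn}(\log\lambda)$ is asserted but never established. Your ``structural'' route is not an argument: $J$ is invariant under both $\delta_\lambda$ and $\rho_\varphi$, so conservation of $J$ together with the self-similarity relation places no direct sign constraint on $\lambda$. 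The direct route is also not immediate: Proposition \ref{coord H} gives $\dot s=e^{-2s}\left(J\cos u+p_\theta\sin u\right)$, and the term $p_\theta\sin u$ need not have a fixed sign over a fundamental domain, so the integrand of $\log\lambda=\int_{t_0}^{t_2}\dot s\,dt$ is not pointwise signed. One would have to do more, e.g.\ pass to the reduced equation for $ds/du$ on the $H=0$ branch $p_\theta+2p_u=\pm\cos(u)^{-1}\sqrt{\Delta(u)}$ with $\Delta(u)=\frac{1}{4\pi}\cos u-(J\cos u+p_\theta\sin u)^2$, which leads to $\log\lambda=\int_{u_{\min}}^{u_{\max}}(J\cos u+p_\theta\sin u)\,\Delta(u)^{-1/2}\,du$, and then argue the sign of this integral (for $J=0$ a parity argument works, since $\Delta$ is even and the integrand odd, giving $\lambda=1$; for $J\neq0$ the asymmetric turning points make this a real piece of work).

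To be fair, the paper's own two-line proof keys the stratification to $\lambda$ and leaves the $J$-versus-$\lambda$ correspondence of Table \ref{table} implicit (it is already used silently in Corollary \ref{quasi}, whose proof assumes $J=0$ forces $\lambda=1$); if you likewise organize the trichotomy by $\lambda$, the behavioral classification follows from Theorem \ref{main thm}, the two corollaries, and time reversal with no sign lemma needed. But since you routed the entire classification through $J$ and identified the sign correspondence as the core of your proof, the proposal as written is incomplete at exactly that point.
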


\begin{proof}
This follows from Corollaries \ref{quasi} and \ref{finite time}, since reversing the flow of time inverts $\lambda$. 
\end{proof}

\begin{table}[h]
\caption{Summary of behavior of $A$-orbits}
\def\arraystretch{1.2}
  \begin{tabular}{ |  c | c | c | }
    \hline
    Dilational momentum & Dilation factor & Behavior of orbit \\ \hline 
    $J<0$ & $\lambda<1$ & Future collision, unbounded past \\ \hline 
    $J>0$ & $\lambda>1$ & Past collision, unbounded future \\ \hline
    $J=0$ & $\lambda=1$ & Periodic or quasi-periodic motion \\ \hline 
  \end{tabular}
  \label{table}
\end{table}


Note that the results in Table \ref{table} provide further evidence for our suspicion that invariant submanifolds are diffeomorphic to $\mathbb T^2 \times \mathbb R^+$.  The action of $J$ is non-compact, so we do not quite have invariant 3-tori, but rather cylinders.  When $J=0$, motion is restricted to a 2-torus.  


\begin{figure}
\centering
\includegraphics[width=.7\textwidth]{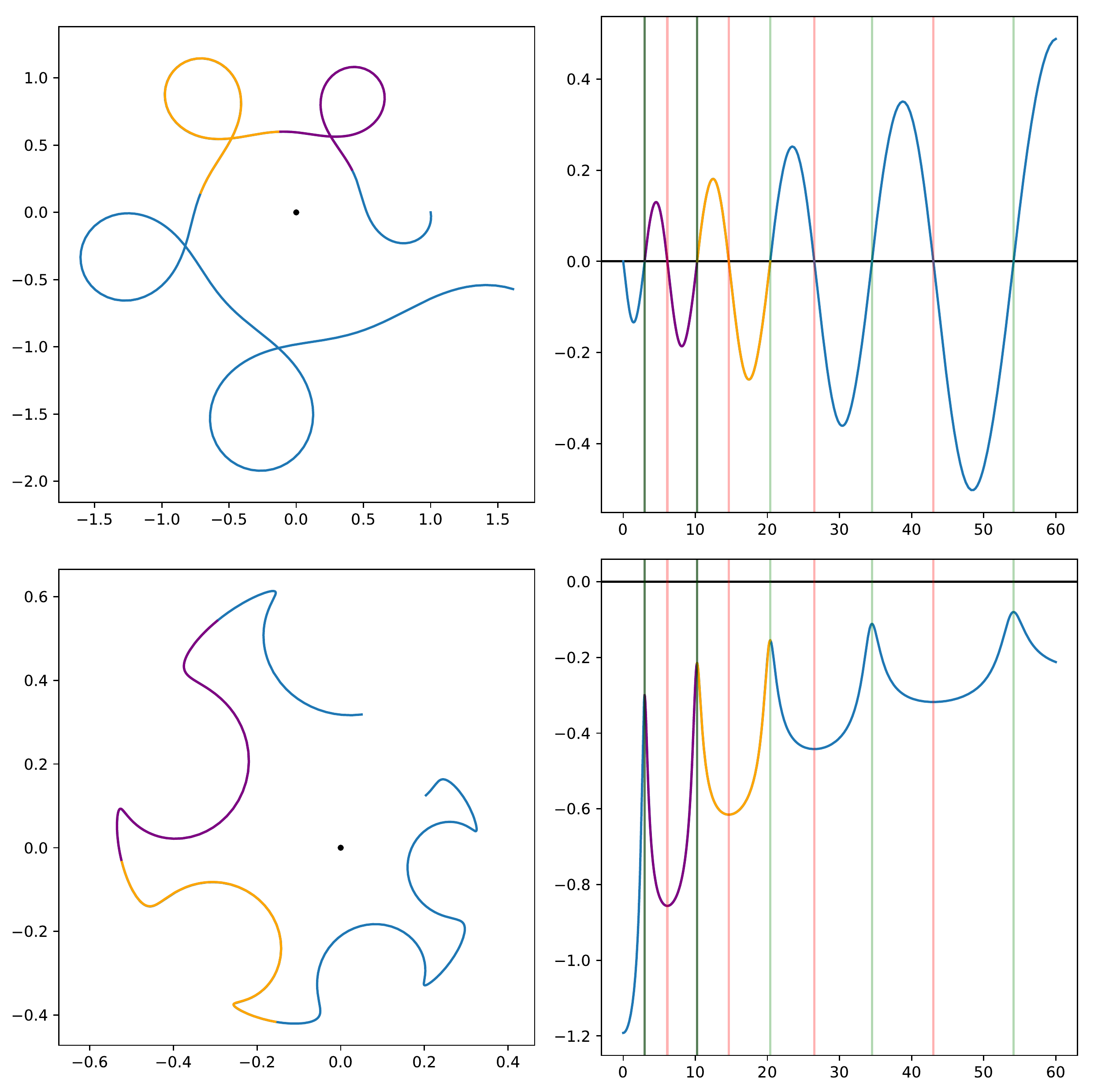}
\caption{Example of numerical verification of self-similarity hypothesis in phase space.  
A fundamental domain (purple) is replicated via self-similarity (orange). 
It matches visually, but we also compute numerical error to verify.
Top row depicts positions $(x,y)$ and $(t,z)$ as in previous figures.  Bottom row depicts momenta $(p_x, p_y)$ and $(t, p_z)$. 
}
\label{4cols}
\end{figure}

\section{Proof of Theorem}\label{proof}
First we briefly sketch the proof.  We extend an arbitrary $A$-orbit from a fundamental domain by gluing scaled, rotated, and reparametrized copies together.  We show this extended curve is continuous and satisfies Hamilton's equations.  Then by uniqueness of solutions to ODEs, the extended curve must be the one and only orbit which extends our original orbit.  Since the extended curve is self-similar by construction, the unrestricted original orbit must be as well.
We begin by proving some technical results.

Throughout this section we use the notation of Theorem \ref{main thm} and the coordinates from Section \ref{coords}.
Assume $c:I \to T^*\mathcal H$ is a solution to Hamilton's equations on a maximal domain $I$.  Assume $H(c(t))=0$ for all $t$.  Suppose $z(t)$ has consecutive zeros at $t_0, t_1, t_2$ and $c$ does not meet the $z$-axis.
Let 
\begin{align*}
\lambda &= \exp (s(t_2) - s(t_0)) \\
\varphi &= \theta(t_2) - \theta(t_0).
\end{align*}
The orbit $c$ (and consequently the constants $t_0, t_1, t_2, J=p_s, p_{\theta}, \lambda, \varphi$) will be fixed throughout; we will prove it is self-similar.

\subsection{Preliminary results}

\begin{lemma}\label{J bounds}
For any $A$-orbit $c(t)=(s(t), \theta(t), u(t), p_s(t), p_{\theta}(t), p_u(t))$,  we have
\[
-\tan\left(u\right)p_{\theta}-\sqrt{2\tan^{2}\left(u\right)p_{\theta}^{2}+\frac{1}{4\pi\cos\left(u\right)}}\leq p_{s}\leq-\tan\left(u\right)p_{\theta}+\sqrt{2\tan^{2}\left(u\right)p_{\theta}^{2}+\frac{1}{4\pi\cos\left(u\right)}}.
\]
\end{lemma}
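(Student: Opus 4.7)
The plan is to deduce the inequality directly from the constraint $H(c(t)) = 0$ using the coordinate expression of the Hamiltonian from Proposition \ref{coord H}. Since $e^{-2s}$ never vanishes, $H = 0$ is equivalent to an algebraic constraint of the form $p^T M p = c$, where $M$ is the symmetric matrix in Proposition \ref{coord H} and $c$ is an explicit constant. The problem therefore reduces to bounding the coordinate $p_s$ subject to this single quadratic constraint in the three momenta $(p_s, p_\theta, p_u)$.

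The central algebraic step is to complete the square so as to isolate the $p_s$-dependence from the ``free'' momentum $p_u$. Completing the square first in $p_u$ (whose coefficient is $4\cos u$) and then in the residual $p_s p_\theta$ cross term yields the factorization
\[
p^T M p \;=\; \cos u\,(p_s + \tan u \cdot p_\theta)^2 + 4\cos u\,(p_u + \tfrac{1}{2} p_\theta)^2.
\]
On an $A$-orbit, condition (A2) keeps $u \in (-\pi/2, \pi/2)$ (the $z$-axis is precisely where $\cos u = 0$), so $\cos u > 0$ throughout; in particular both summands are nonnegative and the quadratic form $M$ is positive semidefinite of rank two. Substituting into $p^T M p = c$ and discarding the nonnegative $(p_u + p_\theta/2)^2$ summand then yields
\[
\cos u\,(p_s + \tan u \cdot p_\theta)^2 \;\leq\; c.
\]

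The final step is to solve this quadratic inequality for $p_s$, for instance via the quadratic formula applied to the corresponding boundary equation. The two roots give the endpoints $-\tan u \cdot p_\theta \pm R$ of the claimed interval, and a short algebraic manipulation identifies $R$ with $\sqrt{2\tan^2 u \cdot p_\theta^2 + 1/(4\pi \cos u)}$, matching the statement.

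The main ``obstacle'' is purely computational bookkeeping in the completion of the square and the quadratic-formula manipulation; there is no analytic difficulty. The only conceptual point worth highlighting is the use of condition (A2) to guarantee $\cos u > 0$, which is what makes $M$ positive semidefinite, what makes dropping the $(p_u + p_\theta/2)^2$ term legitimate, and what allows the final division by $\cos u$.
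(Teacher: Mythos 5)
Your completed-square identity is correct,
\[
p^{T}Mp=\cos u\,\bigl(p_{s}+\tan u\,p_{\theta}\bigr)^{2}+4\cos u\,\bigl(p_{u}+\tfrac{1}{2}p_{\theta}\bigr)^{2},
\]
and your route is essentially the paper's argument in different packaging: the paper solves the $H=0$ relation as a quadratic in $p_{u}$ and demands a nonnegative discriminant, which is exactly your step of completing the square in $p_{u}$ and discarding the nonnegative term $4\cos u\,(p_{u}+\tfrac12 p_{\theta})^{2}$. Your appeal to (A2) for $\cos u>0$ also matches the paper's remark that $\cos u=R/\sqrt{R^{2}+w^{2}}>0$ off the $z$-axis.

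The problem is the final step, in two respects. First, you never pin down the constant $c$: on $\{H=0\}$ the constraint you need is $p^{T}Mp=\tfrac{1}{4\pi}$ (this is the normalization the paper's computations in Lemmas \ref{J bounds} and \ref{endpts} actually use, and the one consistent with $U=-\tfrac{1}{8\pi}e^{-2s}$; reading the printed formula in Proposition \ref{coord H} literally would give $\tfrac{1}{2\pi}$, and with that constant your bound would \emph{not} imply the stated inequality, e.g.\ at $u=0$). Second, with $c=\tfrac{1}{4\pi}$ your argument yields $\bigl|p_{s}+\tan u\,p_{\theta}\bigr|\leq\sqrt{1/(4\pi\cos u)}$, and no algebraic manipulation identifies $\sqrt{c/\cos u}$ with $\sqrt{2\tan^{2}u\,p_{\theta}^{2}+1/(4\pi\cos u)}$; these differ whenever $\tan u\,p_{\theta}\neq0$. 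What is true is that your bound is strictly \emph{sharper}: the lemma's radicand exceeds yours, so enlarging the radicand immediately gives the stated interval. (The extra $2\tan^{2}u\,p_{\theta}^{2}$ in the paper's radical comes from an algebra slip in expanding the discriminant; since it only widens the interval the lemma as stated remains true, and its application in Proposition \ref{J bdd} is unaffected because the extra term vanishes at $u=0$.) So the fix is one line: state $c=\tfrac{1}{4\pi}$, and replace ``identifies with'' by ``is bounded above by''; your proof is then complete and in fact proves more than the lemma claims.
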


\begin{proof}
This is simply a calculation, rewriting the $H=0$ condition, which is quadratic in the momenta coordinates.  Using Proposition \ref{coord H}, we find that $H=0$ is equivalent to
$$
p_{u} 
 =\frac{-\cos\left(u\right)p_{\theta}\pm\sqrt{\frac{1}{4\pi}\cos\left(u\right)-\left(\cos\left(u\right)p_{s}+\sin\left(u\right)p_{\theta}\right)^{2}}}{2\cos\left(u\right)}.
$$
Setting the discriminant $\Delta\left(p_{s}\right)=\frac{1}{4\pi}\cos\left(u\right)-\left(\cos\left(u\right)p_{s}+\sin\left(u\right)p_{\theta}\right)^{2}$ equal to zero gives
$$
p_s=-\tan\left(u\right)p_{\theta}\pm\sqrt{2\tan^{2}\left(u\right)p_{\theta}^{2}+\frac{1}{4\pi\cos\left(u\right)}}.
$$
Because $\Delta\left(p_{s}\right)$ is a negative-definite quadratic
function of $p_s$, it is the interior of these bounds that produces non-negative
values for $\Delta\left(p_{s}\right),$ and therefore
\[
-\tan\left(u\right)p_{\theta}-\sqrt{2\tan^{2}\left(u\right)p_{\theta}^{2}+\frac{1}{4\pi\cos\left(u\right)}}\leq p_{s}\leq-\tan\left(u\right)p_{\theta}+\sqrt{2\tan^{2}\left(u\right)p_{\theta}^{2}+\frac{1}{4\pi\cos\left(u\right)}}.
\]
Note that condition (\ref{avoidzaxis}) implies $\cos(u) \neq 0$ and the expression under the square root for these bounds is
always positive since $\cos\left(u\right)=\frac{R}{\sqrt{R^{2}+w^{2}}}$
and $R> 0$. 


\end{proof}

\begin{proposition}\label{J bdd}
For any $A$-orbit we have $|J|\leq \frac{1}{2\sqrt \pi}$.
\end{proposition}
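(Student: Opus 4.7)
The plan is to apply Lemma \ref{J bounds} at a zero of $z(t)$ and exploit conservation of $J$ along zero-energy orbits. Recall from Proposition \ref{coord maps} that $p_s = J$ in the new coordinates, and from Section \ref{intro} that $\dot J = 2H$, so $J$ is a constant of motion on the hypersurface $\{H = 0\}$. In particular, $p_s(t)$ is constant along the $A$-orbit.

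Next, I would use condition (\ref{3zeros}): there exists at least one time $t_*$ with $z(t_*) = 0$. In the coordinates of Section \ref{coords}, $u = \arg(x^2+y^2, 4z)$, and since the orbit avoids the $z$-axis by (\ref{avoidzaxis}) we have $R = x^2+y^2 > 0$ throughout, so $z(t_*) = 0$ is equivalent to $u(t_*) = 0$. Substituting $u = 0$ into the inequality of Lemma \ref{J bounds} makes $\tan(u) = 0$ and $\cos(u) = 1$, collapsing both sides of the bound to $\pm \sqrt{1/(4\pi)}$.

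Therefore at $t = t_*$ we obtain $|p_s(t_*)| \leq \frac{1}{2\sqrt{\pi}}$. Combined with the constancy of $p_s = J$ along the orbit, this yields $|J| \leq \frac{1}{2\sqrt{\pi}}$ as claimed.

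There is essentially no obstacle here; the proposition is a direct corollary of Lemma \ref{J bounds} once one evaluates at a zero of $z$. The only minor point requiring care is verifying that $R(t_*) > 0$ so that $u(t_*) = 0$ is well-defined, but this is guaranteed by condition (\ref{avoidzaxis}). Conditions (\ref{H=0}) and (\ref{3zeros}) are both used in an essential way: the former to make $J$ an integral of motion, and the latter to guarantee the existence of a time at which the bound of Lemma \ref{J bounds} simplifies to give the sharp constant $\frac{1}{2\sqrt{\pi}}$.
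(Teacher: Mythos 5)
Your argument is correct and follows essentially the same route as the paper's proof: evaluate the bound of Lemma \ref{J bounds} at a time where $z$ (equivalently $u$) vanishes, note $\tan(u)=0$ and $\cos(u)=1$ collapse the bound to $\pm\frac{1}{2\sqrt{\pi}}$, and then use that $p_s=J$ is constant along the orbit since $H=0$.
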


\begin{proof}
Let $c(t)=(s(t), \theta(t), u(t), p_s(t), p_{\theta}(t), p_u(t))$ be such an orbit. Then at some time $t_0$ we have $z(t_0)=0$, so $u(t_0)=0$.  This implies $\tan(u(t_0))=0$ and $\cos(u(t_0))=1$.  Then by Lemma \ref{J bounds} we have  $-\sqrt{\frac{1}{4\pi}} \leq p_s(t_0) \leq \sqrt{\frac{1}{4\pi}}$.  But since $H(c)=0$, we have that $p_s=J$ is constant in time, which completes the proof.
\end{proof}

Proposition \ref{J bdd} is strong: it says there is a uniform bound on the dilational momentum of all $A$-orbits.  But this bound is also crucial in the proof of the next result, which says that the end point of such an orbit on a fundamental domain is a rotation and dilation of its start point.  This Lemma is the technical zenith of the proof of the main result; the rest will follow easily and naturally.

\begin{lemma}\label{endpts}
We have $c(t_2)= \rho_{\varphi} \circ \delta_{\lambda} \circ c(t_0)$.
\end{lemma}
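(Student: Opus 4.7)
The plan is to work entirely in the adapted coordinates $(s, \theta, u, p_s, p_\theta, p_u)$, where by Proposition \ref{coord maps} the composition $\rho_\varphi \circ \delta_\lambda$ adds $\log \lambda$ to $s$ and $\varphi$ to $\theta$ while fixing the remaining four coordinates. The claimed identity thus decomposes into six scalar equalities at $t_2$ and $t_0$. Four of these are essentially free: $s(t_2) - s(t_0) = \log \lambda$ and $\theta(t_2) - \theta(t_0) = \varphi$ are the definitions of $\lambda$ and $\varphi$; the equality $u(t_2) = u(t_0) = 0$ is immediate from $z = 0 \iff u = 0$; conservation of angular momentum gives $p_\theta(t_2) = p_\theta(t_0)$; and because $H \equiv 0$ along $c$, the identity $\dot J = 2H$ yields $p_s(t_2) = J = p_s(t_0)$.

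All the real content of the lemma therefore lies in showing $p_u(t_2) = p_u(t_0)$. Substituting $u = 0$ into the $H=0$ relation from Proposition \ref{coord H} collapses the quadratic form to
\[
(2 p_u + p_\theta)^2 = \tfrac{1}{4\pi} - J^2,
\]
whose right-hand side is nonnegative by Proposition \ref{J bdd}. If $|J| = \tfrac{1}{2\sqrt{\pi}}$ the right-hand side is zero and $p_u = -p_\theta/2$ is forced at every zero of $u$, so the equality is automatic. In the generic case $|J| < \tfrac{1}{2\sqrt{\pi}}$ the quantity $2 p_u + p_\theta$ may equal either of two nonzero opposite-sign values at each zero of $u$, so the task is to show that $t_0$ and $t_2$ select the same sign.

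To resolve this, I compute from Proposition \ref{coord H} that
\[
\dot u \;=\; \frac{\partial H}{\partial p_u} \;=\; 2 e^{-2s} \cos u\,(2 p_u + p_\theta),
\]
so at any zero of $u$ the sign of $\dot u$ matches the sign of $2 p_u + p_\theta$. In particular, in the generic case this sign is nonzero, so all zeros of $u$ are simple and $u$ genuinely changes sign across each of them. Since $t_0, t_1, t_2$ are three consecutive zeros, $u$ flips sign between $[t_0, t_1]$ and $[t_1, t_2]$, which forces $\dot u(t_0)$ and $\dot u(t_2)$ to share a common sign (opposite to that of $\dot u(t_1)$). Hence $2 p_u(t_0) + p_\theta$ and $2 p_u(t_2) + p_\theta$ agree in sign, and combined with the quadratic constraint they must be equal, giving $p_u(t_0) = p_u(t_2)$. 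The main obstacle I anticipate is precisely this sign-matching step: it is what forces the use of \emph{three} consecutive zeros (condition (A3)) rather than two, and it is the only place the conditions (A1)--(A3) conspire together in a nontrivial way. Everything else is bookkeeping in the adapted chart.
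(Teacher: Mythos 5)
Your proposal is correct and follows essentially the same route as the paper's own proof: reduce everything to $p_u(t_0)=p_u(t_2)$, use the $u=0$, $H=0$ constraint (with the bound from Proposition \ref{J bdd}) to pin $p_u$ down to two values, and resolve the sign ambiguity by noting that $\operatorname{sign}(\dot u)$, equal to $\operatorname{sign}(2p_u+p_\theta)$ at a zero, must alternate at the three consecutive transversal zeros $t_0,t_1,t_2$. The degenerate case $|J|=\tfrac{1}{2\sqrt{\pi}}$ and the bookkeeping for the remaining five coordinates are handled exactly as in the paper.
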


\begin{proof}
The main idea here will be to show that 
\begin{equation}\label{p_u endpts}
p_u(t_0) = p_u(t_2);
\end{equation} 
 the other five coordinates are easy.  First note that $u(t_0)=u(t_2)=0$; that is, the endpoints here are characterized by lying in the $u=0$ plane (which is also the $z=0$ plane).  Therefore much of the subsequent analysis will be focused on the behavior of the orbit when $u=0$.
Imposing the two constraints $u=H=0$ and applying Proposition \ref{coord H} gives the following equation:

\begin{align*}
0 & =\left[\begin{array}{ccc}
p_{s} & p_{\theta} & p_{u}\end{array}\right]\left[\begin{array}{ccc}
\cos0 & \sin0 & 0\\
\sin0 & \sec0 & 2\cos0\\
0 & 2\cos0 & 4\cos0
\end{array}\right]\left[\begin{array}{c}
p_{s}\\
p_{\theta}\\
p_{u}
\end{array}\right]-\frac{1}{4\pi}\\
 & =4p_{u}^{2}+4p_{\theta}p_{u}+\left(p_{s}^{2}+p_{\theta}^{2}-\frac{1}{4\pi}\right).
 \end{align*}
Thus, $$
 p_{u}|_{u=0}  = -\frac{1}{2}	p_{\theta}\pm\frac{1}{2}\sqrt{\frac{1}{4\pi}-p_{s}^{2}}.$$
 
Now by Proposition \ref{J bdd}, we have $\frac{1}{4\pi}-p_s^2 \geq 0$, so this expression is always defined.
If $\frac{1}{4\pi}-p_s^2 = 0$, then $p_u|_{u=0} = -\frac{1}{2}p_{\theta}$, which is constant, so clearly (\ref{p_u endpts}) holds.
Now assume $\frac{1}{4\pi}-p_s^2 >0$.   
Let
$$
p_{u}^{\gamma}  = -\frac{1}{2}	p_{\theta}+\gamma\frac{1}{2}\sqrt{\frac{1}{4\pi}-p_{s}^{2}},
$$
where $\gamma\in\left\{ \pm1\right\} .$ 
From Hamilton's
equations,
$
\frac{du}{dt}\rvert_{u=0} 
 =\exp\left(-2s\right)\left(2p_{\theta}+4p_{u}|_{u=0}\right).
$
But $p_{u}|_{u=0}$ is equal to $p_{u}^{\gamma}$ for some
$\gamma$ that depends on $t.$ Denote this time-dependent $\gamma$
value by $\gamma\left(t\right).$ Thus
$$
\left. \frac{du}{dt}\right|_{u=0} 
 =\exp\left(-2s\right)\left(2p_{\theta}+4p_{u}^{\gamma\left(t\right)}\right)
  =2\gamma\left(t\right)\exp\left(-2s\right)\sqrt{\frac{1}{4\pi}-p_{s}^{2}}.
$$
Since $\frac{1}{4\pi}-p_{s}^{2}>0$, we have
$
\text{sign}\left(\left. \frac{du}{dt}\right|_{u=0} \right) 
 =\text{sign}\left(\gamma\left(t\right)\right),
$
showing that the solution curve $c\left(t\right)$ intersects the
plane $u=0$ transversally. Because $c$ is continuous,
the sign of $\left. \frac{du}{dt}\right|_{u=0} $ must alternate with each
intersection. Thus
$
\text{sign}\left(\gamma\left(t_{0}\right)\right)  =-\text{sign}\left(\gamma\left(t_{1}\right)\right)
 =\text{sign}\left(\gamma\left(t_{2}\right)\right),
$
and since $\gamma$ must take values in $\left\{ \pm1\right\} ,$
it follows that $\gamma\left(t_{0}\right)=\gamma\left(t_{2}\right),$
and therefore that
$
p_{u}\left(t_{0}\right)  =p_{u}\left(t_{2}\right).
$
Thus (\ref{p_u endpts}) holds in the case that $\frac{1}{4\pi}-p_s^2 >0$ as well.

Since $p_{s}$ and $p_{\theta}$ are conserved, we use (\ref{p_u endpts}), Proposition \ref{coord maps}, and the definitions of $\lambda$ and $\varphi$ to obtain
\begin{align*}
\rho_{\varphi}\circ\delta_{\lambda}\circ c\left(t_{0}\right) 
& =\rho_{\varphi}\circ\delta_{\lambda}\left[\begin{array}{cc}
s\left(t_{0}\right) & p_{s}\left(t_{0}\right)\\
\theta\left(t_{0}\right) & p_{\theta}\left(t_{0}\right)\\
u\left(t_{0}\right) & p_{u}\left(t_{0}\right)
\end{array}\right] \\
 & =\left[\begin{array}{cc}
s\left(t_{0}\right)+\log\exp\left(s\left(t_{2}\right)-s\left(t_{0}\right)\right) & p_{s}\left(t_{0}\right)\\
\theta\left(t_{0}\right)+\left(\theta\left(t_{2}\right)-\theta\left(t_{0}\right)\right) & p_{\theta}\left(t_{0}\right)\\
0 & p_{u}\left(t_{0}\right)
\end{array}\right]
  =\left[\begin{array}{cc}
s\left(t_{2}\right) & p_{s}\left(t_{2}\right)\\
\theta\left(t_{2}\right) & p_{\theta}\left(t_{2}\right)\\
u\left(t_{2}\right) & p_{u}\left(t_{2}\right)
\end{array}\right]
  =c\left(t_{2}\right).
\end{align*}

\end{proof}

\subsection{Constructing a self-similar trajectory}\label{constructing}

We now aim to paste together self-similar copies of $c$ restricted to $[t_0, t_2]$.  The main obstacle here is correctly reparametrizing time to account for the dilation factor $\lambda$.  If $\lambda=1$ then no such reparametrization is necessary, so we temporarily assume $\lambda \neq 1$.  
To this end, for any $\psi \in \mathbb R$, let 
$$\xi(t) =\frac{1}{2}\log_{\lambda}\left(1-(t-t_0)\frac{1-\lambda^2}{t_2-t_0}  \right)$$
\begin{align*}\label{tau}
\tau_{\psi} (t)&= 
t_0+(t_2-t_0)\frac{1-\lambda^{2\xi(t) + 2\psi}}{1-\lambda^2} \\
&=t_0 + (1-\lambda^{2\psi})\left( \frac{t_2-t_0}{1-\lambda^2}\right) + (t-t_0)\lambda^{2\psi}.
\end{align*}

\begin{lemma}\label{tau lemma}
The function $\tau_{\psi}$ enjoys the following properties.
\begin{enumerate}[(i)]
\item The map $\psi \mapsto \tau_{\psi}$ is a homomorphism from the additive group of real numbers to the group of invertible functions from $\mathbb R$ to $\mathbb R$ under composition.  That is, $\tau_{-\psi}=\tau_{\psi}^{-1}$ and $\tau_{\psi} \circ \tau_\eta = \tau_{\psi+\eta}$ and $\tau_0=\text{Id}_{\mathbb R}$.
\item  For any $k\in \mathbb Z$,  $\tau_k$ maps the fundamental domain $[t_0, t_2]$ to the appropriately scaled fundamental domain $k$ domains to the right. \label{shift}
\item If $\psi \neq 0$ then $\tau_{\psi}$ has exactly one fixed point, which is the collision time $t_{\text{col}}.$
\end{enumerate}  
\end{lemma}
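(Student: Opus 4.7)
The plan is to rewrite $\tau_\psi$ in a form that exposes its geometry, then read all three properties off that form. Setting $t_{\text{col}} = t_0 + \frac{t_2-t_0}{1-\lambda^2}$ (the collision time appearing in Corollary \ref{finite time}), a direct computation from the second expression for $\tau_\psi$ yields the key identity
\[
\tau_\psi(t) - t_{\text{col}} = \lambda^{2\psi}\,(t - t_{\text{col}}).
\]
Thus $\tau_\psi$ is the affine dilation of $\mathbb{R}$ centered at $t_{\text{col}}$ with multiplicative factor $\lambda^{2\psi}$.

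Property (i) is then immediate: the identity gives $\tau_\psi \circ \tau_\eta(t) - t_{\text{col}} = \lambda^{2\psi}\lambda^{2\eta}\,(t - t_{\text{col}}) = \lambda^{2(\psi+\eta)}\,(t - t_{\text{col}})$, so $\tau_\psi \circ \tau_\eta = \tau_{\psi+\eta}$; taking $\psi = 0$ gives $\tau_0 = \mathrm{Id}_{\mathbb{R}}$, and $\tau_{-\psi} = \tau_\psi^{-1}$ follows by setting $\eta = -\psi$. For (iii), the fixed-point equation reduces via the key identity to $(\lambda^{2\psi} - 1)(t - t_{\text{col}}) = 0$, which forces $t = t_{\text{col}}$ as the unique solution, since $\lambda \neq 1$ and $\psi \neq 0$.

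For (ii), evaluating at the endpoints of $[t_0, t_2]$ gives $\tau_k([t_0,t_2]) = [t_{\text{col}} + \lambda^{2k}(t_0 - t_{\text{col}}),\, t_{\text{col}} + \lambda^{2k}(t_2 - t_{\text{col}})]$, an interval of length $\lambda^{2k}(t_2 - t_0)$. The only point requiring a line of algebra is to check that consecutive $k$-images abut, i.e., $\tau_k(t_2) = \tau_{k+1}(t_0)$; this reduces to the identity $t_2 - t_{\text{col}} = \lambda^2 (t_0 - t_{\text{col}})$, which is immediate from the definition of $t_{\text{col}}$. Together these show the $\tau_k$-images tile the ray on the appropriate side of $t_{\text{col}}$, with the length of the $k$-th tile scaling by $\lambda^2$ per unit step.

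There is no substantial obstacle here; the entire lemma is bookkeeping once the dilation form is in hand, and that form is a one-line calculation motivated by suspecting in advance that $\tau_\psi$ has a unique fixed point at the collision time.
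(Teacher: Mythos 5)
Your proposal is correct, and every step checks out: with $t_{\text{col}} = t_0 + \frac{t_2-t_0}{1-\lambda^2}$, the identity $\tau_\psi(t) - t_{\text{col}} = \lambda^{2\psi}\,(t-t_{\text{col}})$ follows in one line from the second (affine) form of $\tau_\psi$, and (i), (iii) are then immediate, while your endpoint computation $\tau_k(t_0) = t_0 + (t_2-t_0)\sum_{i=0}^{k-1}\lambda^{2i}$, $\tau_k(t_2) = t_0 + (t_2-t_0)\sum_{i=0}^{k}\lambda^{2i}$ reproduces exactly the intervals in (\ref{pos k}) (and, via the abutment identity $\tau_k(t_2)=\tau_{k+1}(t_0)$, the negative-$k$ intervals of (\ref{neg k}) as well), so (ii) holds. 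The paper itself gives no argument here beyond declaring the lemma ``a straightforward and tedious calculation,'' i.e.\ a direct verification from the defining formula; your route differs in that you first conjugate $\tau_\psi$ into normal form as the dilation of $\mathbb{R}$ centered at $t_{\text{col}}$ with factor $\lambda^{2\psi}$, which replaces three separate brute-force computations with one identity from which the group law, the unique fixed point, and the tiling of fundamental domains can all be read off. This is a genuinely cleaner organization of the same content, and it also makes transparent the later assertion that consecutive fundamental domains scale by $\lambda^2$ (used in Corollary \ref{finite time}). The only caveat, which you implicitly respect, is that the whole discussion presupposes $\lambda \neq 1$ (the paper sets $\tau_\psi = \mathrm{Id}$ when $\lambda = 1$, in which case (iii) is vacuous since $t_{\text{col}}$ is undefined), so it is worth stating that standing assumption explicitly at the outset.
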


\begin{proof}
This is a straightforward and tedious calculation. 
\end{proof}

We are finally in a position to extend $c$ from the domain $[t_0, t_2]$ by pasting together transformed copies.  We define the curve $C$ inductively, beginning with $C(t)=c(t)$ for all $t \in [t_0, t_2]$.  We then define $C$ on the next fundamental domain as 
$$C(t) = \rho_{\varphi} \circ \delta_{\lambda} \circ c \circ \tau_1(t)
\qquad \text{for all} \  t \in [t_2, t_2 + \lambda^2(t_2-t_0)].$$
We can continue to the next fundamental domain by letting
$$C(t) = \rho_{\varphi} \circ \delta_{\lambda} \circ c \circ \tau_2(t)
\qquad \text{for all} \  t \in [t_2 + \lambda^2(t_2-t_0), t_2 + \lambda^2(t_2-t_0)+\lambda^4(t_2-t_0)].$$
In general, for $k\in \mathbb Z^+$, we have 
\begin{equation}\label{pos k}
C(t) = \rho_{\varphi} \circ \delta_{\lambda} \circ c \circ \tau_k(t)
\qquad \text{for all} \  t \in \left[t_0 + (t_2-t_0)\sum_{i=0}^{k-1}\lambda^{2i}, t_0 + (t_2-t_0)\sum_{i=0}^{k}\lambda^{2i}\right].
\end{equation}

Similarly, we can define $C$ for past times, on the fundamental domain prior to $[t_0, t_2]$ as 
$$C(t) = \rho_{\varphi} \circ \delta_{\lambda} \circ c \circ \tau_{-1}(t)
\qquad \text{for all} \  t \in [t_0 - \lambda^{-2}(t_2-t_0), t_0],$$
and generally for $k\in \mathbb Z^-$, by
\begin{equation}\label{neg k}
C(t) = \rho_{\varphi} \circ \delta_{\lambda} \circ c \circ \tau_k(t)
\qquad \text{for all} \  t \in \left[t_2 - (t_2-t_0)\sum_{i=0}^{k}\lambda^{2i}, t_2 - (t_2-t_0)\sum_{i=0}^{k+1}\lambda^{2i}\right].
\end{equation}
Note that these expressions are still valid for the case of $\lambda=1$ if we set $\tau_{\psi}(t)=t$ for all $\psi$.
Thus we piece-wise construct the curve $C$, whose domain is $(-\infty,t_{\text{col}})$ if $\lambda<1$, $(t_{\text{col}}, \infty)$ if $\lambda>1$, and all real numbers if $\lambda =1$. 

The functions $\xi$, $\tau_{\psi}$ (above) and $\tau$ (Theorem \ref{main thm}) require some explanation.  Intuitively, $\xi$ dilates the original time $t$ so that $z(\xi)$ is periodic with constant period $T=t_2-t_0$, rather than having ``periods" forming the geometric sequence $T, \lambda^2 T, \lambda^4 T, \dots$.  For an integer $k$, the function $\tau_k$ shifts the fundamental domain $[t_2, t_0]$ by $k$ fundamental domains to the right, as can be seen in Equations (\ref{pos k}) and (\ref{neg k}) above.  The function $\tau$ in Theorem \ref{main thm} combines the two in order to avoid the inductive construction of $C$ given in this section and state the main theorem in a more concise and self-contained manner.  More precisely, the floor $\lfloor \xi(t) \rfloor$ computes the integer $k$ that represents how many fundamental domains to the right of $[t_0, t_2]$ we must shift to contain the time $t$.  That is,
$$ \tau = \tau_{\lfloor \xi \rfloor}.$$

\begin{proposition}
The curve $C$ is continuous.
\end{proposition}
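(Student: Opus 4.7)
The strategy is that continuity within each of the defining intervals is automatic, so the entire content of the proposition is continuity at the boundary points between consecutive pieces. Each piece of $C$ is a composition of continuous maps---the symplectomorphisms $\rho_\varphi$ and $\delta_\lambda$ (or their appropriate iterates), the restriction of $c$ to $[t_0,t_2]$, and the affine reparametrization $\tau_\psi$ for some integer $\psi$---and is therefore continuous on the interior of its interval of definition. Consequently, I would only need to check matching at the finitely many (in any bounded subinterval) times at which two successive defining formulas meet.

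Fix such a boundary time $t^\star$, the right endpoint of the $k$-th fundamental domain and simultaneously the left endpoint of the $(k+1)$-th, for some $k \in \mathbb Z^+$ (the case $k \in \mathbb Z^-$ being symmetric). I would first invoke Lemma \ref{tau lemma}(ii), or equivalently a short computation from the explicit formula for $\tau_\psi$ together with the shift identity $\xi(\tau_\psi(t)) = \xi(t) + \psi$, to check that the reparametrization appearing in the definition of $C$ on the $k$-th piece sends $t^\star$ to $t_2$, while the reparametrization appearing on the $(k+1)$-th piece sends the same $t^\star$ to $t_0$.

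Substituting into the piecewise definition, the left-hand limit of $C$ at $t^\star$ becomes the appropriate power of $\rho_\varphi \circ \delta_\lambda$ applied to $c(t_2)$, and the right-hand limit becomes that power incremented by one, applied to $c(t_0)$. Continuity at $t^\star$ therefore reduces exactly to the endpoint identity $c(t_2) = \rho_\varphi \circ \delta_\lambda \circ c(t_0)$, which is precisely Lemma \ref{endpts}. The $\lambda = 1$ case is immediate since $\tau_\psi$ reduces to the identity, and Lemma \ref{endpts} still matches the two pieces at $t_2$.

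The main obstacle, such as it is, is really just bookkeeping: keeping track of indices, identifying which boundary time corresponds to $t_0$ versus $t_2$ under each $\tau_\psi$, and handling the geometric-series expressions that describe the endpoints of the fundamental domains. Once Lemmas \ref{tau lemma} and \ref{endpts} are in hand, the continuity of $C$ follows without any genuinely new input.
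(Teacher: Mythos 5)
Your argument is correct and is essentially the paper's own proof, which likewise reduces the matter to continuity at the junction times and settles it by citing Lemma \ref{endpts} together with Lemma \ref{tau lemma} (\ref{shift}); you have simply written out the bookkeeping (the shift identity $\xi \circ \tau_{\psi} = \xi + \psi$, which reparametrization sends the junction to $t_2$ versus $t_0$, and the application of $\rho_{\varphi} \circ \delta_{\lambda}$) that the paper compresses into ``by construction, the pieces of $C$ agree at the endpoints.'' No new ideas beyond those two lemmas are needed, and your treatment of the $\lambda = 1$ case matches the paper's convention $\tau_{\psi} = \mathrm{Id}$.
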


\begin{proof}
This follows from Proposition \ref{endpts} and Lemma \ref{tau lemma} (\ref{shift}).  By construction, the pieces of $C$ agree at the endpoints (and $C$ is clearly continuous elsewhere).
\end{proof}

\begin{proposition}
The curve $C$ is smooth and satisfies Hamilton's equations.
\end{proposition}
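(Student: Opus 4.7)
The plan is to verify Hamilton's equations on the open interior of each fundamental domain by a direct chain-rule computation, and then invoke uniqueness of ODE solutions --- together with the continuity of $C$ established in the previous proposition --- to extend smoothness across the gluing points.

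On the open interior of each fundamental domain, $C$ is a composition of smooth maps: the symplectomorphism $\rho_{\varphi}\circ\delta_{\lambda}$ (or an appropriate iterate of it), the smooth orbit $c$ restricted to $[t_{0},t_{2}]$, and the affine reparametrization $\tau_{k}$. Hence $C$ is manifestly smooth on the interior. To check Hamilton's equations, I would apply the chain rule to the defining formula and use that $c$ itself satisfies $\dot{c}=X_{H}(c)$. Because $\rho_{\varphi}$ and $\delta_{\lambda}$ are symplectomorphisms with $H\circ\rho_{\varphi}=H$ and $H\circ\delta_{\lambda}=\lambda^{-2}H$, the relevant composition pushes $X_{H}$ forward to $\lambda^{2k}X_{H}$, yielding
\[
\dot{C}(t)=\tau_{k}'(t)\,\lambda^{2k}\,X_{H}(C(t)).
\]
The reparametrization was defined precisely so that $\tau_{k}'(t)=\lambda^{-2k}$, which is immediate from the linear expression for $\tau_{\psi}$ given in Section \ref{constructing}. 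The two factors cancel exactly, yielding $\dot{C}=X_{H}(C)$ on the interior of each fundamental domain.

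At each boundary point $t_{b}$ between adjacent fundamental domains, I would extend smoothness via uniqueness of solutions. Let $\tilde{C}$ denote the unique smooth solution of Hamilton's equations passing through $C(t_{b})$ on a small neighborhood of $t_{b}$, which exists because the Hamiltonian vector field is smooth on our chart (the $z$-axis is excluded by hypothesis (\ref{avoidzaxis}), and the collision is excluded since a fundamental domain is bounded away from $t_{\mathrm{col}}$). On each side of $t_{b}$, both $C$ and $\tilde{C}$ are smooth solutions agreeing at $t_{b}$ by the continuity of $C$, hence they coincide on a one-sided neighborhood. Therefore $C$ agrees with the smooth curve $\tilde{C}$ on a full two-sided neighborhood of $t_{b}$, so $C$ is smooth and satisfies Hamilton's equations at $t_{b}$ as well.

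The main obstacle will be the chain-rule cancellation between the factor $\lambda^{2k}$ coming from the dilation and the factor $\tau_{k}'(t)=\lambda^{-2k}$ coming from the reparametrization. This is the only place in the argument where the specific logarithmic-exponential form of $\xi$ and $\tau_{\psi}$ enters, and it is precisely what these functions were engineered to achieve; once this compatibility is verified, the smoothness of each piece, the gluing across boundary points, and the verification of Hamilton's equations at the boundaries all follow automatically from the standard uniqueness theorem.
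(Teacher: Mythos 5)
Your proof is correct and takes essentially the paper's route: the chain-rule cancellation you verify is exactly the scaling fact the paper imports from \cite{MS} (each transformed, reparametrized piece solves Hamilton's equations because $\rho_\varphi$, $\delta_\lambda$ are symplectomorphisms with $H\circ\delta_\lambda=\lambda^{-2}H$), and your gluing at the domain boundaries via continuity of $C$ plus local uniqueness and smoothness of the Hamiltonian vector field is the paper's final step, just spelled out more explicitly. One bookkeeping remark: with the paper's formula one has $\tau_\psi'(t)=\lambda^{2\psi}$, so the factor $\lambda^{-2k}$ you use is the derivative of $\tau_{-k}=\tau_k^{-1}$, the map carrying the $k$-th fundamental domain back to $[t_0,t_2]$, which --- paired with the $k$-th iterate $(\rho_\varphi\circ\delta_\lambda)^k$ you allow parenthetically --- is the internally consistent reading of the pasting construction.
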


\begin{proof}
This holds on $[t_0, t_2]$ by assumption: $C=c$ there and $c$ was assumed to be a solution.  
The maps $\rho_{\varphi}$ and $\delta_{\lambda}$ are symmetries of the $H=0$ system ($\rho_{\varphi}$ is a symmetry of the full system) and commute with the flow of the Hamiltonian vector field.  That is, they preserve solutions.  This can also be seen in Proposition \ref{coord maps}, as both simply translate a single coordinate in the appropriate chart.  
 
Now $\tau = \tau_{\lfloor \xi \rfloor}$ shifts the fundamental domain $[t_0, t_2]$ and dilates it by a factor of $\lambda^2$.  The shift simply gives us the correct domain for $C$.  In \cite{MS}, we showed that if a curve $c$ satisfies Hamilton's equations, then so does $\delta_{\lambda}(c(\lambda^{-2}t))$.  In fact, we showed more generally that a result like this holds for any homogeneous potential, and leads to a version of Kepler's third law.  Here, we have that $H$ is homogeneous of degree two, as $H\circ \delta_{\lambda}=\lambda^{-2}H$.

Thus each piece of $C$ is a solution.  Since $C$ is continuous in phase space we find that $C$ is a solution on all its domain.  Since $H$ is smooth away from the origin, it has a smooth Hamiltonian vector field, which has smooth solutions, showing that $C$ is in fact smooth.

\end{proof}

\subsection{Proof of Theorem \ref{main thm}}
All the hard work is done, and Theorem \ref{main thm} now follows from the Picard-Lindel\"{o}f Theorem.  
Our Hamiltonian vector field is Lipschitz at the endpoints of the fundamental domains since it is continuous on pre-compact neighborhoods of these points.  
Thus $C$ is the unique solution to Hamilton's equations and extends $c$ to its maximal domain $I$ (to future or past collision).

\subsection*{Acknowledgments}
The authors are grateful to Richard Montgomery for many valuable conversations.  The first author would like to thank MSRI for the privilege of taking part in the Fall 2018 Hamiltonian Systems semester, during which the bulk of this project was completed.
















\bigskip

\end{document}